\def\today{\ifcase\month\or
  January\or February\or March\or April\or May\or June\or
  July\or August\or September\or October\or November\or December\fi
  \space\number\day, \number\year}
\DeclareMathOperator{\supp}{\mathrm{supp}}
 \newtheorem{theorem}{Theorem}
 \newtheorem{lemma}[theorem]{Lemma}
 \newtheorem{proposition}[theorem]{Proposition}
 \newtheorem{corollary}[theorem]{Corollary}
 \theoremstyle{definition}
 \theoremstyle{remark}
 \newcommand{\mc}{\mathcal}
 \newcommand{\M}{\mc{M}}
 \newcommand{\R}{\mathbb{R}}
 \newcommand{\hh}{\tfrac12}
 \newcommand{\ds}{\text{\rm d}s}
 \newcommand{\dt}{\text{\rm d}t}
  \renewcommand{\d}{\text{\rm d}}
 \newcommand{\du}{\text{\rm d}u}
 \newcommand{\dv}{\text{\rm d}v}
\newcommand{\im}{{\rm Im}\,}
\newcommand{\re}{{\rm Re}\,}
\begin{document}
\title[Large values of the argument of the\\ Riemann zeta-function and its iterates]{Large values of the argument of the\\ Riemann zeta-function and its iterates}
\author[Chirre and Kamal]{Andr\'{e}s Chirre and Kamalakshya Mahatab}

\address{Department of  Mathematical Sciences, Norwegian University of Science and Technology, NO-7491 Trondheim, Norway}
\email{carlos.a.c.chavez@ntnu.no }
\address{Kamalakshya Mahatab, Department of Mathematics and Statistics, University of Helsinki, P. O. Box 68, FIN
00014 Helsinki, Finland}
\email{accessing.infinity@gmail.com, \. kamalakshya.mahatab@helsinki.fi}

\thanks{AC was supported by Grant 275113 of the Research Council of Norway. KM was supported by Grant 227768 of the Research Council of Norway and Project 1309940 of Finnish Academy, and a part of this was work was carried out during his Leibniz fellowship at MFO, Oberwolfach.}

\subjclass[2010]{11M06, 11M26, 11N37}
\keywords{Riemann zeta function, Riemann hypothesis, argument of the Riemann zeta}

\allowdisplaybreaks
\numberwithin{equation}{section}

\maketitle

\begin{abstract}  
Let $S(\sigma,t)=\frac{1}{\pi}\arg\zeta(\sigma+it)$ be the argument of the Riemann zeta-function at the point  $\sigma+it$ in the critical strip. For $n\geq 1$ and $t>0$, we define
\begin{equation*}
S_{n}(\sigma,t) = \int_0^t S_{n-1}(\sigma,\tau) \,\d\tau\, + \delta_{n,\sigma\,},
\end{equation*}
where $\delta_{n,\sigma}$ is a specific constant depending on $\sigma$ and $n$. Let $0\leq \beta<1$ be a fixed real number. Assuming the Riemann hypothesis, we establish lower bounds for the maximum of $S_n(\sigma,t+h)-S_n(\sigma,t)$ near the critical line, on the interval $T^\beta\leq t \leq T$ and in a small range of $h$. This improves some results of the first author and generalizes a result of the authors on $S(t)$. We also give new omega results for $S_n(t)$, improving a result by Selberg.

\end{abstract}

\section{Introduction}
In this paper, we make use of the resonance method to improve several omega results related to the argument of the Riemann zeta-function. 

\subsection{The functions $S_n(\sigma,t)$}
Let $\zeta(s)$ denote the Riemann zeta-function. For $\hh\leq\sigma \leq 1$ and $t>0$, we define
$$S(\sigma,t) = \tfrac{1}{\pi} \arg \zeta \big(\sigma + it \big),$$
where the argument is obtained by continuous variation along straight line segments joining the points $2$, $2+i t$ and $\sigma + it$, assuming that the segment from  $\sigma + it$ to $2+i t$ has no zeros of $\zeta(s)$, and with the convention that $\arg \zeta(2) = 0$. If this path has zeros of $\zeta(s)$ (including the endpoint $\sigma + it$) we define $S(\sigma,t)  = \tfrac{1}{2}\, \lim_{\varepsilon \to 0} \left\{ S(\sigma,t + \varepsilon) + S(\sigma,t - \varepsilon)\right\}$. Let us define the iterates of the function $S(\sigma,t)$ in the following form: setting $S_{0}(\sigma,t):=S(\sigma,t)$, we define 
\begin{equation*}
S_{n}(\sigma,t) = \int_0^t S_{n-1}(\sigma,\tau) \,\d\tau\, + \delta_{n,\sigma\,} \text{ for } n\geq 1.
\end{equation*}
The constants $\delta_{n,\sigma}$ depends on $\sigma$ and $n$, and are given by 
$$\delta_{2k-1,\sigma} =\frac{ (-1)^{k-1}}{\pi} \int_{\sigma}^{\infty} \int_{u_{2k-1}}^{\infty} \ldots \int_{u_{3}}^{\infty} \int_{u_{2}}^{\infty} \log |\zeta(u_1)|\, \du_1\,\du_2\,\ldots \,\du_{2k-1}, $$
for $n = 2k-1$ with $k \geq 1$, and
$$\delta_{2k,\sigma} = (-1)^{k-1} \int_{\sigma}^{1} \int_{u_{2k}}^{1} \ldots \int_{u_{3}}^{1} \int_{u_{2}}^{1} \du_1\,\du_2\,\ldots \,\du_{2k} = \frac{(-1)^{k-1}(1-\sigma)^{2k}}{(2k)!},  $$ 
for $n = 2k$ with $k \geq 1$. 

\subsection{Large values on the critical line} In the case of $\sigma=\hh$, let us write $S_n(t)=S_n(\hh,t)$ to return to the classical notation (e.g. Littlewood \cite{L} and Selberg \cite{S2}). The argument function $S(t)$ is connected to the distribution of the non-trivial zeros of the Riemann zeta function through the classical Riemann von-Mangoldt formula 
$$N(t) = \frac{t}{2\pi} \log \frac{t}{2\pi} - \frac{t}{2\pi} + \frac{7}{8}  + S(t) + O\bigg( \frac{1}{t}\bigg),$$
where $N(t)$ counts (with multiplicity) the number of zeros $\rho = \beta + i \gamma$ of $\zeta(s)$ such that $0 < \gamma \leq t$ (zeros with ordinate $\gamma = t$ are counted with weight $\hh$). The behavior of the functions $S_n(t)$ encodes the oscillatory character of the function $S(t)$ and efforts have been made to establish precise estimates of these functions (see 
\cite{CChi}, \cite{CChiM}, \cite{Chi},
 \cite{F1}, \cite{L}, \cite{L2}, \cite{S2},
\cite{S1}, \cite{W}).  

\vspace{0.2cm}
The Riemann hypothesis (RH) states that all the non-trivial zeros of $\zeta(s)$ have real part $\hh$. The classical estimates for $S_n(t)$ under RH are due to Littlewood \cite{L}, with the bounds $S_n(t)=O({\log t}/{(\log\log t)^{n+1}})$. The most recent refinements of these bounds are due to Carneiro, Chandee and Milinovich \cite{CCM1} for $n=0$ and $n=1$, and due to Carneiro and the first author \cite{CChi} for $n\geq 2$ (see also \cite{CChiM}). 
 On the other hand, Selberg\footnote{\,\,\, In \cite[Pages 3 and 4]{S1}, Selberg commented that these omega results were not established explicitly by Littlewood but can be proved by the usual methods.} established, assuming RH, that
\begin{align} \label{21_52}
S_n(t)=\Omega_{\pm}\bigg(\dfrac{(\log t)^{1/2}}{(\log\log t)^{n+1}}\bigg),
\end{align}
for $n\geq0$. The cases $n=0$ and $n=1$ were improved by Montgomery \cite[Theorem 2]{M} and Tsang \cite[Theorem 5]{T}, under RH, respectively:
$$
S(t) = \Omega_{\pm}\bigg(\dfrac{(\log t)^{1/2}}{(\log\log t)^{1/2}}\bigg), \hspace{0.3cm} \mbox{and}  \hspace{0.3cm} S_1(t) = \Omega_{\pm}\bigg(\dfrac{(\log t)^{1/2}}{(\log\log t)^{3/2}}\bigg).
$$
Using a new version of the classical resonance method, Bondarenko and Seip \cite[Theorem 2]{BS}, under RH, refined the order of magnitude of these omega results, showing that
\begin{align}  \label{21_31}
S(t) = \Omega\bigg(\dfrac{(\log t)^{1/2}(\log\log\log t)^{1/2}}{(\log\log t)^{1/2}}\bigg), \hspace{0.3cm} \mbox{and}  \hspace{0.3cm} S_1(t) = \Omega_{+}\bigg(\dfrac{(\log t)^{1/2}(\log\log\log t)^{1/2}}{(\log\log t)^{3/2}}\bigg).
\end{align}
Extending the method of Bondarenko and Seip, the first author \cite[Corollary 3]{Chi} established, under RH that
\begin{equation} \label{21_32}
S_n(t) = \left\{
\begin{array}{ll}\vspace{0.3cm}
\Omega_{+}\bigg(\dfrac{(\log t)^{1/2}(\log\log\log t)^{1/2}}{(\log\log t)^{n+1/2}}\bigg),& \mathrm{if\ } n\,\equiv\, 1 \, (\mathrm{mod} \, 4),  \\
\Omega\bigg(\dfrac{(\log t)^{1/2}(\log\log\log t)^{1/2}}{(\log\log t)^{n+1/2}}\bigg), & \mathrm{otherwise.\ } \
\end{array}
\right.
\end{equation}
Using the resonator of Bondarenko and Seip along with suitable kernels and RH, the authors \cite[Theorem 1]{Ma} have improved the result of Montgomery on $S(t)$ by proving 
\begin{align} \label{21_33}
S(t) = \Omega_{\pm}\bigg(\dfrac{(\log t)^{1/2}(\log\log\log t)^{1/2}}{(\log\log t)^{1/2}}\bigg).
\end{align}

\smallskip

\subsection{Large values near the critical line} Our main purpose in this paper is to extend the previous results of $S_n(t)$ to the function $S_n(\sigma,t)$ near the critical line. We will start by establishing bounds for the extreme values of the differences $S_n(\sigma,t+h)-S_n(\sigma,t)$.

\begin{theorem} \label{27_9_7:50am}
	Assume the Riemann hypothesis. Let $0<\beta<1$ be a fixed real number and $n\geq 0$ be a fixed integer. 
	Let $T>0$ be sufficiently large, $h\in [0,(\log\log T)^{-1}]$, and $\sigma\geq \hh$. Consider the following two cases:
\begin{enumerate}
 \item[(i)] either 
 \[n=0 \quad \text{ and } \quad \dfrac{1}{2} < \sigma\leq \dfrac{1}{2}+\dfrac{1}{\log\log T},\]
 \item[(ii)] or 
 \[n\geq 1 \quad \text{ and } \quad \dfrac{1}{2} \leq \sigma\leq \dfrac{1}{2}+\dfrac{1}{\log\log T}.\]
\end{enumerate}
Then\footnote{\,\,\, The notation $f\gg g$ means that there is a positive constant $c>0$ such that $f(x)\geq c\,g(x)$.}
		$$
		\displaystyle\max_{T^\beta\leq t\leq T} \delta_n\{ S_n(\sigma,t+h)-S_n(\sigma,t)\} \gg h\,\dfrac{(\log T)^{1/2}(\log\log\log T)^{1/2}}{ (\log\log T)^{n-1/2}},
		$$
where $\delta_n=\pm1$ if $n$ is odd, and $\delta_n= (-1)^{(n+2)/2}$ if $n$ is even.
\end{theorem}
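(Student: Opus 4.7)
The plan is to combine the resonance method of Bondarenko--Seip \cite{BS}, as adapted in \cite{Chi} and \cite{Ma}, with a Dirichlet-polynomial representation of the difference $S_n(\sigma,t+h)-S_n(\sigma,t)$.

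First I would derive, under RH, an identity of the form
\begin{equation*}
\delta_n\{S_n(\sigma,t+h)-S_n(\sigma,t)\} \;=\; \frac{1}{\pi}\,\im\int_t^{t+h}\sum_{p^k\leq x}\frac{c_{n,k}(\tau,p)}{k^{n+1}\,p^{k\sigma}(\log p)^{n+1}}\,\d\tau + \mathcal{E}(\sigma,t,h;x),
\end{equation*}
where $c_{n,k}$ is a trigonometric phase whose parity (matched by $\delta_n$) makes the main contribution non-negative after pairing with $|R(t)|^2$, and $\mathcal{E}$ is an error involving a sum over nontrivial zeros of $\zeta(s)$. Such a formula follows from iterated integration of Selberg's representation for $\im\log\zeta$, in the spirit of the auxiliary identities of \cite{CChi,CChiM}, and remains valid uniformly for $\sigma\geq\tfrac12$. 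Because $h\leq(\log\log T)^{-1}$, the $\tau$-integral produces a clean factor of $h$ up to a negligible $O((h\log p)^2)$ correction from expanding $e^{-ikh\log p}-1$.

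Next I would deploy the Bondarenko--Seip resonator $R(t)=\sum_{m\in\mathcal{M}}r(m)m^{-it}$ with a smooth cutoff $\Phi$ supported in $[T^\beta,T]$, and bound the maximum from below by the ratio
\begin{equation*}
\mathcal{R}(T):=\frac{\int_{\R}\Phi(t/T)|R(t)|^2\,\delta_n\{S_n(\sigma,t+h)-S_n(\sigma,t)\}\,\dt}{\int_{\R}\Phi(t/T)|R(t)|^2\,\dt}.
\end{equation*}
Substituting the explicit formula and extracting the diagonal contribution from pairs $(m,m')\in\mathcal{M}^2$ with $m=pm'$ yields a main term of the shape
\begin{equation*}
h\sum_{p\leq x}\frac{1}{p^{\sigma}(\log p)^{n}}\cdot\frac{\sum_{m,\,pm\in\mathcal{M}}r(m)\,r(pm)}{\sum_{m\in\mathcal{M}}r(m)^2},
\end{equation*}
for which the Bondarenko--Seip construction provides the crucial $(\log\log T)^{-1/2}$ gain in the weighted inner ratio; summing over primes in the tuned range of size $\asymp(\log T)/(\log\log T)$ reproduces the claimed magnitude $h(\log T)^{1/2}(\log\log\log T)^{1/2}(\log\log T)^{-n+1/2}$. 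The prime-power contributions with $k\geq 2$, the error $\mathcal{E}$, and the boundary effects of $\Phi$ are all absorbed using Cauchy--Schwarz together with mean-square estimates for $R(t)\zeta(\sigma+it)$. For odd $n$, multiplying the resonator coefficients by a $\pm 1$ pattern (for instance, a twist by $(-1)^{\Omega(m)}$) reverses the sign of the main term and delivers both $\Omega_+$ and $\Omega_-$; for even $n$, the leading trigonometric factor has a definite sign that forces $\delta_n=(-1)^{(n+2)/2}$.

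The main obstacle will be uniformizing every estimate across the narrow strip $\tfrac12\leq\sigma\leq\tfrac12+(\log\log T)^{-1}$. The key observation is that in this range, $p^{-\sigma}=p^{-1/2}\bigl(1+O((\log p)/\log\log T)\bigr)$ for primes $p$ up to the resonant scale, so that --- with careful bookkeeping of the $\sigma$-dependence in the explicit-formula error and in the prime-ratio counts of \cite{BS} --- the critical-line analysis of \cite{Chi,Ma} should transfer uniformly across the $\sigma$-window without losing the $(\log\log\log T)^{1/2}$ gain. Carrying out this uniform transfer, and verifying that the error $\mathcal{E}$ never exceeds the main term when integrated against $|R(t)|^2$, is the chief technical point of the proof.
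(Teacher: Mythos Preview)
Your plan is in the right spirit but differs from the paper's route in two essential respects, and one of those differences hides a real gap.

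\textbf{Different decomposition.} The paper never writes an explicit formula for $\Delta_h S_n$ with an error over zeros. Instead it proves a convolution identity (Proposition~\ref{9_21_4:20pm}): for suitable $\gamma,\delta,\delta'$ the kernel $K(u)=\big(\sin(\gamma u\log_2T)/u\big)^2\big(3\delta+2\delta'\sin(u\log_2T)\big)$ is of one sign, and
\[
\int_{|u|\le(\log T)^3}\!\!\Delta_hS_n(\sigma,t+u)\,K(u)\,\du
=\im\!\Big\{3i^{n+3}\delta\sum_m\tfrac{\Lambda(m)a_m}{(\log m)^{n+1}m^{\sigma+it}}\Big\}
+\im\!\Big\{i^{n+2}\delta'\sum_m\tfrac{\Lambda(m)b_m}{(\log m)^{n+1}m^{\sigma+it}}\Big\}
+O(h\log_2T),
\]
where $a_m,b_m\ge0$ are supported on $m\le(\log T)^{2\gamma}$ and on $(\log T)^{1-2\gamma}\le m\le(\log T)^{1+2\gamma}$ respectively. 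This comes from Perron's formula and a contour shift (Lemma~\ref{9_20_10:05am}); the factor $h$ in the error arises from writing $\Delta_h\log\zeta=\int_{-h}^{h}(\zeta'/\zeta)\,\du$ and integrating the bound of Lemma~\ref{prop}. No sum over zeros ever appears, so there is no $\mathcal{E}$ to absorb by Cauchy--Schwarz against $R(t)\zeta(\sigma+it)$. Your proposed route through a truncated explicit formula plus zero-sum remainder could in principle be made to work, but controlling that remainder against the resonator at the level of $(\log_3T)^{1/2}$ is exactly what the convolution device is designed to avoid.

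\textbf{Sign control for odd $n$.} Here is the gap. You propose to obtain both signs by twisting the resonator coefficients by $(-1)^{\Omega(m)}$. But the Bondarenko--Seip resonator used here (Section~3, from \cite{Chi}) is \emph{not} indexed by the multiplicative set $\mathcal{M}$: one sets $r(m_j)=\big(\sum_{n\in\mathcal{M},\,n\in I_j}f(n)^2\big)^{1/2}$ for the minimum $m_j$ of a short interval $I_j$. A twist by $(-1)^{\Omega(m_j)}$ has no clean relation to the underlying $f$-weights, and the crucial lower bound (Lemma~\ref{20_41_1_20}, i.e.\ \cite[Lemma~13]{Chi}) is proved only for non-negative Dirichlet coefficients $c_n$. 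The paper's mechanism is entirely different: for $n$ odd one takes $\gamma=1/8$, $\delta'=(-1)^{(n+1)/2}$, and leaves $\delta\in\{-1,1\}$ \emph{free}. The $b_m$-sum, supported near $m\asymp\log T$ where the resonator primes $\mathcal{P}$ live, carries the fixed sign $\delta'$ and becomes the main term via Lemma~\ref{20_41_1_20}; the $a_m$-sum carries the free sign $\delta$ but is supported only on $m\le(\log T)^{1/4}$ and is shown to be negligible (Lemma~\ref{9_22_6:02pm2}). Thus $\delta$ controls the sign of the kernel $K$ (hence of the inequality) while contributing nothing to the main term, and both signs fall out with the same resonator. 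For $n$ even one takes $\gamma=2/3$, $\delta'=0$, $\delta=(-1)^{(n+2)/2}$, and the $a_m$-sum is itself the main term. Your uniformity remark about $p^{-\sigma}\sim p^{-1/2}$ is correct in spirit; in the paper this is handled simply by noting that \eqref{17_24} lets one replace $(\log T)^{1-\sigma}(\log_3T)^{\sigma}(\log_2T)^{-\sigma}$ by $(\log T)^{1/2}(\log_3T)^{1/2}(\log_2T)^{-1/2}$ at the end.
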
 

The particular case of $n=1$ and $\sigma=\hh$ in Theorem \ref{27_9_7:50am} is related to a result of Tsang \cite[Theorem 6]{T}. Assuming RH, he proved that
$$
\displaystyle\sup_{T\leq t \leq 2T}\pm\{S_1(t+h)-S_1(t)\}\gg h\,\dfrac{(\log T)^{1/2}}{(\log\log T)^{1/2}},
$$
for $h\in [0,(\log\log T)^{-1}]$ (see also \cite[p. 252]{BLM}). Also Theorem \ref{27_9_7:50am} allows us to obtain extreme values for the functions $S_n(\sigma,t)$, improving a result of the first author \cite[Theorem 2]{Chi} (which is a general form of \eqref{21_32}).

\begin{corollary}  \label{9_29_4:06pm}
Assume the Riemann hypothesis. Let $0<\beta<1$ be a fixed real number and $n\geq 0$ be a fixed integer. Let $T>0$ be sufficiently large and suppose that 
\begin{equation} \label{9_29_1:58pm}
\dfrac{1}{2} \leq \sigma\leq \dfrac{1}{2}+\dfrac{1}{\log\log T}.
\end{equation}
Then
	$$
	\displaystyle\max_{T^\beta\leq t\leq T} \delta_n\{ S_n(\sigma,t)\} \gg \dfrac{(\log T)^{1/2}(\log\log\log T)^{1/2}}{ (\log\log T)^{n+1/2}},
	$$
	where $\delta_n=\pm1$ if $n$ is even, and $\delta_n= (-1)^{(n+3)/2}$ if $n$ is odd.
\end{corollary}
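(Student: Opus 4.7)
The plan is to deduce Corollary~\ref{9_29_4:06pm} directly from Theorem~\ref{27_9_7:50am} by exploiting the fact that, by construction, $\partial_t S_{n+1}(\sigma,t)=S_n(\sigma,t)$ for every $n\geq 0$. In other words, Theorem~\ref{27_9_7:50am} already controls the difference of an \emph{antiderivative} of $S_n$, which is nothing but the integral of $S_n$ over the short interval $[t,t+h]$; consequently, a lower bound for the difference of $S_{n+1}$ immediately translates into a pointwise lower bound for the extreme values of $S_n$ itself.

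Concretely, I would fix $n\geq 0$ and $\sigma$ in the admissible range $\h\leq \sigma\leq \h+1/\log\log T$, and choose $h=(\log\log T)^{-1}$, the largest value permitted in Theorem~\ref{27_9_7:50am}. I then apply that theorem at the shifted index $N:=n+1\geq 1$, so that we fall into case (ii) and the theorem accommodates the full range $\sigma\geq \h$; this is precisely why the corollary can include $\sigma=\h$ even in the case $n=0$, whereas case (i) of the theorem itself would exclude it. Denoting by $\widetilde\delta_N$ the sign produced by the theorem, the identity
\[S_N(\sigma,t+h)-S_N(\sigma,t)=\int_t^{t+h}S_n(\sigma,\tau)\,\d\tau\]
gives, for every $t\in[T^\beta,T]$,
\[\widetilde\delta_N\{S_N(\sigma,t+h)-S_N(\sigma,t)\}\;\leq\;h\cdot\max_{\tau\in[T^\beta,\,T+h]}\widetilde\delta_N\,S_n(\sigma,\tau).\]
Taking the maximum over $t\in[T^\beta,T]$ on the left and inserting the lower bound of Theorem~\ref{27_9_7:50am} applied at index $N$, the factor $h$ cancels and, noting that $N-\h=n+\h$, we obtain
\[\max_{\tau\in[T^\beta,\,T+h]}\widetilde\delta_N\,S_n(\sigma,\tau)\;\gg\;\frac{(\log T)^{1/2}(\log\log\log T)^{1/2}}{(\log\log T)^{n+1/2}}.\]
The enlargement of the $\tau$-range from $[T^\beta,T]$ to $[T^\beta,T+h]$ is immaterial and can be absorbed, if one wishes to be pedantic, by applying the theorem with $T-h$ in place of $T$.

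It remains only to reconcile the sign $\widetilde\delta_N$ with the sign $\delta_n$ asserted in Corollary~\ref{9_29_4:06pm}. If $n$ is even, then $N=n+1$ is odd and Theorem~\ref{27_9_7:50am} yields both signs $\widetilde\delta_N=\pm 1$, which recovers the $\Omega_\pm$ conclusion of the corollary. If $n$ is odd, then $N=n+1$ is even, the theorem forces $\widetilde\delta_N=(-1)^{(N+2)/2}=(-1)^{(n+3)/2}$, in exact agreement with the sign $\delta_n$ prescribed by the corollary. The only place where genuine care is needed is this sign bookkeeping across the index shift; all of the analytic heavy lifting is contained in Theorem~\ref{27_9_7:50am}, and no fresh estimate is required, so I do not anticipate any substantive obstacle in making the argument rigorous.
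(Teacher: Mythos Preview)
Your proposal is correct and follows essentially the same route as the paper: apply Theorem~\ref{27_9_7:50am} at index $N=n+1$ with $h=(\log\log T)^{-1}$, use $S_{N}(\sigma,t+h)-S_{N}(\sigma,t)=\int_t^{t+h}S_n(\sigma,\tau)\,\d\tau$ to bound the difference above by $h$ times the maximum of $\widetilde\delta_N S_n$, and then adjust $T$ and $\beta$ to absorb the interval overshoot. Your sign bookkeeping under the index shift $n\mapsto n+1$ is exactly what is needed, and nothing further is required.
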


The case $n=0$ in Corollary \ref{9_29_4:06pm} was also studied by Tsang \cite[Theorem 2 and p. 382]{T}. He proved under RH that
	$$
	\displaystyle\sup_{T\leq t \leq 2T}\pm S(\sigma,t)\gg \dfrac{(\log T)^{1/2}}{(\log\log T)^{1/2}},
	$$
	in the range \eqref{9_29_1:58pm}. Note that for $n\geq 0$ and  $\sigma=\hh$, we recover the results in \eqref{21_31},  \eqref{21_32} and \eqref{21_33}, and we give new conditional omega results for $S_n(t)$. This improves the estimate of Selberg \eqref{21_52} in several cases:
\begin{equation*}
S_n(t) = \left\{
\begin{array}{ll}\vspace{0.3cm}
\Omega_{\pm}\bigg(\dfrac{(\log t)^{1/2}(\log\log\log t)^{1/2}}{(\log\log t)^{n+1/2}}\bigg),      & \mathrm{if\ } n\,\equiv\, 0 \, (\mathrm{mod} \, 4) \,\,\,\mbox{or} \, \,\,n\,\equiv\, 2 \, (\mathrm{mod} \, 4),  \\ \vspace{0.3cm}
\Omega_{+}\bigg(\dfrac{(\log t)^{1/2}(\log\log\log t)^{1/2}}{(\log\log t)^{n+1/2}}\bigg),& \mathrm{if\ } n\,\equiv\, 1 \, (\mathrm{mod} \, 4),  \\
\Omega_{-}\bigg(\dfrac{(\log t)^{1/2}(\log\log\log t)^{1/2}}{(\log\log t)^{n+1/2}}\bigg),    & \mathrm{if\ } n\,\equiv\, 3 \, (\mathrm{mod} \, 4).\
\end{array}
\right.
\end{equation*}

\smallskip

On the other hand, using an argument of Fujii, we can obtain some of these omega results unconditionally, when $n\geq 3$. \begin{corollary} \label{21_322} Unconditionally, for $n\geq 3$ and $n\,\not\equiv\, 3 \, (\mathrm{mod} \, 4)$, we have that
	$$
	S_n(t)=\Omega_{+}\bigg(\dfrac{(\log t)^{1/2}(\log\log\log t)^{1/2}}{ (\log\log t)^{n+1/2}}\bigg).
	$$
\end{corollary}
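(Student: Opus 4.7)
The plan is to argue by a dichotomy on whether the Riemann hypothesis is true.

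First, if RH holds, the conclusion follows immediately from Corollary~\ref{9_29_4:06pm} applied at $\sigma=\tfrac12$. When $n$ is even, the sign parameter $\delta_n=\pm 1$ in that corollary is free, so in particular the bound holds for $\delta_n=+1$, giving $\Omega_+$. When $n\equiv 1\pmod 4$, the prescription $\delta_n=(-1)^{(n+3)/2}$ evaluates to $+1$, again producing $\Omega_+$. These two subcases cover precisely the $n\geq 3$ with $n\not\equiv 3\pmod 4$; the excluded residue class is exactly the one in which Corollary~\ref{9_29_4:06pm} provides only $\Omega_-$, which is why it must be omitted from the statement.

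If RH fails, I would invoke a classical argument going back to Fujii~\cite{F1}. Fix a nontrivial zero $\rho_0=\beta_0+i\gamma_0$ of $\zeta(s)$ with $\beta_0>\tfrac12$. Via the explicit formula for $S(t)$, combined with a Landau-type oscillation theorem applied to the resulting sum over zeros, one obtains the unconditional two-sided estimate
\[
S(t)=\Omega_\pm\!\left(t^{\beta_0-1/2-\varepsilon}\right)
\]
for every $\varepsilon>0$. Since $S_n(t)$ differs from the $n$-fold iterated integral of $S$ only by the bounded constant $\delta_{n,1/2}$, and since the oscillations of $S$ occur at a bounded frequency (comparable to $\gamma_0$), $n$ successive integrations by parts transfer this estimate to $S_n(t)=\Omega_\pm(t^{\beta_0-1/2-\varepsilon})$. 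This rate dwarfs the target $(\log t)^{1/2}(\log\log\log t)^{1/2}(\log\log t)^{-(n+1/2)}$ by arbitrary positive powers of $t$, and in particular supplies the desired $\Omega_+$ conclusion. Combining both cases gives the unconditional result.

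The main obstacle will be the RH-fails case: sharpening the absolute oscillation of $S$ into the signed $\Omega_\pm$ form requires combining, in the explicit formula, the four zeros $\rho_0$, $\bar\rho_0$, $1-\rho_0$ and $1-\bar\rho_0$, all of which are zeros of $\zeta$ by conjugation and the functional equation, and carefully tracking how each contributes to the oscillation. Once this base $\Omega_\pm$ estimate for $S$ is in hand, the passage to $S_n$ is a routine integration-by-parts calculation.
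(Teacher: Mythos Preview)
Your dichotomy and your treatment of the RH case match the paper's proof exactly, including the sign analysis explaining why $n\equiv 3\pmod 4$ is excluded.

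In the RH-fails case, however, the paper does something much simpler than what you propose. It does not derive an $\Omega_\pm$ result for $S(t)$ and then try to propagate it to $S_n$; instead it quotes directly from Fujii \cite[p.~6]{F1} the \emph{pointwise} lower bound
\[
S_n(t)\gg t^{\,n-2}
\]
for all sufficiently large $t$ whenever RH fails. This is not an oscillation statement at all: $S_n(t)$ is eventually positive and tends to $+\infty$ (here the hypothesis $n\geq 3$ enters, making the exponent $n-2\geq 1$). The $\Omega_+$ conclusion is then immediate, and there is nothing to transfer.

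Your proposed route has a real gap at the step you flagged yourself, but for a different reason than you anticipated. The difficulty is not in getting the signed $\Omega_\pm$ for $S(t)$; it is that an $\Omega_\pm$ bound does \emph{not} survive integration in general (e.g.\ $\sin t=\Omega_\pm(1)$ but $\int_0^t\sin=1-\cos t$ is never negative). Your appeal to ``bounded frequency'' and ``integration by parts'' is heuristic; making it rigorous would essentially require redoing Fujii's analysis for $S_n$ directly via its own explicit formula, at which point one recovers the pointwise bound above anyway. So the obstacle you identified dissolves once you cite Fujii's result for $S_n$ rather than for $S$.
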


\medskip

We remark that the case $n=0$ and $\sigma=\hh$ has not been explored in Theorem \ref{27_9_7:50am}. The following result considers this exceptional case, proving a similar result, but in a shorter range.
\begin{theorem} \label{27_9_7:50am2}
	Assume the Riemann hypothesis. Let $0<\beta<1$ be a fixed real number. Then 
	$$
	\displaystyle\max_{T^\beta\leq t\leq T} -\{ S(t+h)-S(t)\} \gg h\,(\log T)^{1/2}(\log\log T)^{1/2}(\log\log\log T)^{1/2},
	$$
	for $h\in \big[c\,(\log T)^{-1/2}(\log\log T)^{-1/2}(\log\log\log T)^{-1/2},(\log\log T)^{-1}\big]$, with some constant $c>0$. 
\end{theorem}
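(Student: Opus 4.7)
The plan is to extend the resonance argument of Theorem~\ref{27_9_7:50am} to the exceptional case $n=0$, $\sigma=\hh$. The main difficulty on the critical line, as in the authors' earlier work \cite{Ma} on $S(t)$, is that $\log\zeta(\hh+it)$ does not admit an absolutely convergent Dirichlet series, so a Selberg-type approximation is needed to extract an arithmetic expression for $S(t+h)-S(t)$.

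Under RH, a suitable mollified approximation yields
\[
-\pi\{S(t+h)-S(t)\} \;=\; 2\sum_{n\leq X}\frac{\Lambda(n)\sin(h\log n/2)}{\sqrt{n}\,\log n}\cos\!\big((t+\tfrac{h}{2})\log n\big) + E(t;h,X),
\]
for an appropriate truncation $X$ (a small power of $T$). The sine factor comes from the identity $\sin((t+h)\log n)-\sin(t\log n)=2\cos((t+\tfrac{h}{2})\log n)\sin(h\log n/2)$, and the error $E(t;h,X)$ collects zero contributions and tail effects from the smoothing, controlled in mean square over $[T^{\beta},T]$.

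Next, introduce the Bondarenko--Seip resonator $R(t)=\sum_{m}r(m)m^{it}$ with $r$ supported on a multiplicative set near a scale $N=T^{\kappa}$ for small $\kappa=\kappa(\beta)>0$, together with a smooth positive weight $\Phi$ such that $\Phi(t/T)$ is localized to $[T^{\beta},T]$ and $\widehat{\Phi}$ has very small support. Study
\[
\mc{I} \;=\; \int_{-\infty}^{\infty} -\{S(t+h)-S(t)\}\,|R(t)|^{2}\,\Phi(t/T)\,\dt.
\]
Substituting the approximation and using the positivity $r(m)r(mp)\geq 0$ of the Bondarenko--Seip construction, the diagonal-type contributions from pairs $(m,mp)$ with $p$ prime produce a main term proportional to
\[
\Big(\int|R(t)|^{2}\,\Phi(t/T)\,\dt\Big)\cdot\sum_{p\in\mc{P}}\frac{\sin(h\log p/2)\cos(h\log p/2)}{\sqrt{p}}\cdot\frac{\sum_{m}r(m)r(mp)}{\|r\|_{2}^{2}},
\]
where $\mc{P}$ is the Bondarenko--Seip prime set. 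For $h\log p$ small on $\mc{P}$, the sine factor contributes $\sim h\log p$; combined with the prime-sum bound from \cite{BS,Ma} (where $\log p$ is on average of size $\asymp\log\log T$ for $p\in\mc{P}$), this gives
\[
\mc{I}_{\mathrm{main}} \;\gg\; h\,(\log T)^{1/2}(\log\log T)^{1/2}(\log\log\log T)^{1/2}\cdot\int|R(t)|^{2}\,\Phi(t/T)\,\dt.
\]

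The hard part is to bound the contribution of $E(t;h,X)$ and the off-diagonal resonator terms so that they are dominated by $\mc{I}_{\mathrm{main}}$. This dictates the trade-off between $X$, $\kappa$, and the placement of $\mc{P}$, and accounts for the stated lower restriction on $h$: if $h<c(\log T)^{-1/2}(\log\log T)^{-1/2}(\log\log\log T)^{-1/2}$, the linearization $\sin(h\log p/2)\approx h\log p/2$ shrinks $\mc{I}_{\mathrm{main}}$ below the error, and the positive lower bound is lost. Once $\mc{I}_{\mathrm{main}}$ dominates, dividing $\mc{I}$ by $\int|R(t)|^{2}\,\Phi(t/T)\,\dt$ exhibits some $t\in[T^{\beta},T]$ with $-\{S(t+h)-S(t)\}$ of the claimed size.
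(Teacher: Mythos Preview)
Your route is genuinely different from the paper's, and as written it has a real gap in the error control.

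The paper does \emph{not} pass through a Selberg-type explicit formula for $S(t)$. It reuses the convolution device of \cite{Ma}: one integrates $\Delta_h\log\zeta(\hh+i(t+u))$ against the kernel $\big(\frac{\sin\alpha u}{u}\big)^2e^{iHu}$ over $|u|\le(\log T)^3$, and Perron's formula turns this into a \emph{finite} Dirichlet polynomial plus a deterministic, pointwise error $O(e^{2\alpha+|H|}/(\log T)^3)$. Taking the imaginary part and running the even case $n=0$ of the proof of Theorem~\ref{27_9_7:50am} (parameters $\gamma=2/3$, $\delta=-1$, $\delta'=0$) yields
\[
h\,(\log T)^{1/2}(\log_2T)^{1/2}(\log_3T)^{1/2}+O(1)\;\ll\;\max_{\frac{T^\beta}{2}\le t\le 2T\log T}-\Delta_hS(t).
\]
The sole difference from the off-critical case is that the $O(1)$ here does not carry a factor $h$ (compare Lemma~\ref{9_20_10:05am}), and this is exactly what forces the lower restriction on $h$. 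Your remark that \cite{Ma} ``needs a Selberg-type approximation'' mischaracterizes that paper; the convolution in $u$ is precisely the substitute for such an approximation on the critical line.

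The gap in your scheme is the step ``$E(t;h,X)$ \dots controlled in mean square''. With the Bondarenko--Seip long resonator one only has $|R(t)|^2\le R(0)^2\ll T^{\kappa}\sum_{l\in\M}f(l)^2$ (Proposition~\ref{19_4_6:04pm}(i)), and no stronger pointwise information. A mean-square bound $\int_{T^\beta}^{T}|E|^2\,\dt\ll CT$ together with Cauchy--Schwarz gives at best
\[
\Big|\int E\,|R|^2\,\Phi\Big|\;\le\;R(0)\Big(\int|E|^2\Phi\Big)^{1/2}\Big(\int|R|^2\Phi\Big)^{1/2}\;\ll\;T^{1+\kappa/2}\sum_{l\in\M}f(l)^2,
\]
which swamps your main term $\mc I_{\mathrm{main}}\gg h\,(\log T)^{1/2}(\log_2T)^{1/2}(\log_3T)^{1/2}\,T\sum_{l\in\M}f(l)^2$ by a power of $T$. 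The alternative bound $\int E|R|^2\Phi\ll\|E\|_\infty\int|R|^2\Phi$ is also too weak, since the Selberg error on the critical line contains a sum over nearby zeros and is not $O(1)$ pointwise. To make your approach viable you would need a direct estimate for the twisted moment $\int E\,|R|^2\Phi$ (or for $\int|E|^2|R|^2\Phi$), and nothing in the proposal supplies one. The paper's convolution in $u$ is exactly the mechanism that replaces this missing estimate by a small \emph{pointwise} remainder.
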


Theorem \ref{27_9_7:50am2} improves an estimate of Selberg (unpublished\footnote{\,\,\, Tsang proved this result of Selberg in \cite[Page 388]{T}.}), where he proved under RH that 
$$
\displaystyle\max_{T\leq t\leq 2T} \pm\{ S(t+h)-S(t)\} \gg (h\,\log T)^{1/2},
$$
for $h\in [(\log T)^{-1},(\log\log T)^{-1}]$. 

%

\medskip

\subsection{Sketch of the proof} Our approach is motivated by the modified version of the resonator of Bondarenko and Seip given by the first author in \cite[Section 3]{Chi}, and the convolution formula obtained by the authors in \cite{Ma}. We start by obtaining certain convolution formulas for $\log \zeta(\sigma+i(t+h))-\log \zeta(\sigma+i(t-h))$ in a small range of $h$. These formulas contain suitable kernels that are completely positive or completely negative\footnote{\,\,\, We say that a function $f$ is completely positive (or completely negative) if $f(x)\geq 0$ (or $f(x)\leq0$) for $x\in\R$.}, and it allows us to pick large positive and negative values. The connection between $\log\zeta(s)$ and $S_n(\sigma, t)$ expresses the convolution formula as two finite sums, of which we must detect which one is the main term, depending on the parity of $n$ and the new parameters involved. Then, we use the resonator due to the first author to obtain estimates for the variation of $S_n(\sigma,t)$ near the critical line. In particular, we highlight that one of the main technical difficulties of this work, when compared to \cite{BS, Chi, Ma}, is in the analysis of the error terms. With a more delicate computation, we obtain the term $h$ in each of the error term that appears in the convolution formulas. Finally, the choice of suitable parameters give the necessary control on the length of the Dirichlet polynomial to apply \cite[Lemma 13]{BS}, and the control on the sign in front of the variation of $S_n(\sigma,t)$. 

We would like to remark that Bui, Lester, and Milinovich \cite{BLM} used the version of the resonance method of Soundararajan \cite{S} to give a new proof of the omega results of Montgomery \cite{M}, using the variation of $S_1(t)$ in short intervals. We refer to \cite{kamal18} for another application of the resonance method to show $\Omega_\pm$ results.

\smallskip

 Throughout this paper we will use the notation $\log_2T=\log\log T$ and $\log_3T=\log\log\log T$. The error terms that appears in each estimate may depend on $n$.


\section{Convolution formulas}

In this section we will obtain certain convolution formulas for $S_n(\sigma,t+h)-S_n(\sigma,t-h)$, when $n\geq 1$, related to kernels that are completely positive or completely negative. We will need the following estimate for $\zeta'(s)/\zeta(s)$ to prove our convolution formulas.


 \begin{lemma}\label{prop}
Assume the Riemann hypothesis. Then for $\hh<\sigma\leq 3$ and for sufficiently large $t$, we have
  $$
 \int_{\sigma}^{3}\bigg|\dfrac{\zeta'}{\zeta}(\alpha+it)\bigg|\d\alpha \ll (1+|\log(2\sigma-1)|)(\log t).
 $$
\end{lemma}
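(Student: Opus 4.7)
The plan is to invoke the standard partial fraction representation of $\zeta'/\zeta$ localized at height $t$ under RH. Specifically, for $-1 \leq \alpha \leq 3$ and sufficiently large $t$, one has
\begin{equation*}
\frac{\zeta'}{\zeta}(\alpha + it) = \sum_{|\gamma - t| \leq 1} \frac{1}{\alpha + it - \rho} + O(\log t),
\end{equation*}
where the sum runs over nontrivial zeros $\rho = \h + i\gamma$, the implied constant is absolute, and the number of terms is $O(\log t)$ by the Riemann--von Mangoldt formula. This representation is obtained in the usual way: write the Hadamard product identity at $s = \alpha + it$ and at $2 + it$, subtract, and absorb the tail $|\gamma - t| > 1$ into the error using the bound $|(\alpha + it - \rho)(2 + it - \rho)|^{-1} \ll (t - \gamma)^{-2}$ together with the zero count in unit intervals. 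The lemma thus reduces to estimating at most $O(\log t)$ one-pole integrals, plus an $O(\log t)$ contribution from integrating the error term over an interval of bounded length.

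The first step is to insert this formula into the integral, apply the triangle inequality, and integrate term by term. For each surviving zero $\rho = \h + i\gamma$ I would use the crude estimate
\begin{equation*}
|\alpha + it - \rho| = \sqrt{(\alpha - \h)^2 + (t - \gamma)^2} \geq \alpha - \h,
\end{equation*}
which yields, uniformly in $\gamma$,
\begin{equation*}
\int_\sigma^3 \frac{\d\alpha}{|\alpha + it - \rho|} \leq \int_\sigma^3 \frac{\d\alpha}{\alpha - \h} = \log\frac{5}{2(\sigma - \h)} \ll 1 + |\log(2\sigma - 1)|.
\end{equation*}
Summing over the $\ll \log t$ zeros with $|\gamma - t| \leq 1$ contributes $\ll (\log t)(1 + |\log(2\sigma - 1)|)$, while the $O(\log t)$ error term in the partial fraction formula integrates to $O(\log t)$ on $[\sigma, 3]$. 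Adding the two contributions gives the claimed inequality.

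The main subtlety, rather than a real obstacle, is checking that dropping the imaginary part $(t - \gamma)^2$ in the denominator is not wasteful. One could sharpen the per-zero bound by splitting into the regimes $|t - \gamma| \leq \alpha - \h$ and $|t - \gamma| > \alpha - \h$, but that only saves absolute constants; the desired bound already follows from the crude estimate because the zero count in the unit neighborhood of $t$ is controlled by $\log t$. The only other point to verify carefully is that the localized partial fraction formula holds uniformly for $\alpha \in [\sigma, 3]$ with $\sigma > \h$ and that its error term is independent of $\alpha$ in that strip; both facts are standard under RH (see e.g.\ Titchmarsh, Ch.~14, or Iwaniec--Kowalski, Ch.~5).
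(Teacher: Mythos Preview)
Your proposal is correct and follows essentially the same approach as the paper: both invoke the localized partial-fraction formula $\zeta'/\zeta(s)=\sum_{|t-\gamma|\le 1}(s-\rho)^{-1}+O(\log t)$ under RH, bound each term by $(\alpha-\tfrac12)^{-1}$, integrate to get the $\log(2\sigma-1)$ factor, and multiply by the $O(\log t)$ zero count. The only cosmetic difference is that the paper splits off the range $[2,3]$ (where $\zeta'/\zeta$ is trivially bounded) and applies the partial-fraction formula on $[\sigma,2]$, whereas you run the formula on all of $[\sigma,3]$; this is immaterial since in $\Re s\ge 2$ the formula holds trivially with error $O(\log t)$.
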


\begin{proof} Clearly, the integral from $2$ to $3$ is bounded. On the other hand, by \cite[Theorem 9.6 (A)]{tit}, uniformly for $\hh\leq \Re(s)\leq 2$, we have
$$
\dfrac{\zeta'(s)}{\zeta(s)}=\displaystyle\sum_{|t-\gamma|\leq 1}\dfrac{1}{s-\rho} + O(\log t),
$$
where the sum runs over the zeros $\rho=\hh+i\gamma$ of $\zeta(s)$. We conclude our required upper bound by integrating the above expression of $\zeta'(s)/\zeta(s)$ from $\sigma$ to $2$ and using the fact that the number of zeros with ordinate in $[t-1,t+1]$ is $O(\log t)$.
\end{proof} 

To simplify the notation, we will write 
\begin{equation*}
\Delta_h\log\zeta(z)=\log\zeta(z+ih)-\log\zeta(z-ih),
\end{equation*}
and 
\begin{equation*}
\Delta_hS_n(\sigma,t)=S_n(\sigma,t+h)-S_n(\sigma,t-h).
\end{equation*}

\begin{lemma}  \label{9_20_10:05am}
	Assume the Riemann hypothesis. Let $0<\beta<1$ be a fixed number. Let $\alpha>0$, $H\in \R$ and $T$ be sufficiently large. Then for $\hh< \sigma\leq 2$, $0\leq h\leq 1$ and $T^{\beta}\leq t\leq T\log T$, we have
	\begin{align*}
	\int_{-(\log T)^{3}}^{(\log T)^{3}}\Delta_h&\log\zeta(\sigma+i(t+u))\bigg(\dfrac{\sin\alpha u}{u}\bigg)^2e^{iHu}\du \\& =-\pi i\displaystyle\sum_{m=2}^{\infty}\dfrac{\Lambda(m)\,w_m(\alpha,H)\sin(h\log m)}{(\log m)\,m^{\sigma+it}} + O\bigg(\dfrac{h(1+|\log(2\sigma-1)|)e^{2\alpha + |H|}}{(\log T)^3}\bigg),
	\end{align*} 
	where 
	$w_m(\alpha,H)=\max\{0,2\alpha-|H-\log m|\}$ 
	for all $m\geq 2$, and $\Lambda(m)$ is the von-Mangoldt function\footnote{\,\,\, $\Lambda(m)$ is defined as $\log p$ if $m=p^k$ with $p$ a prime number and $k\geq 1$ an integer, and zero otherwise.}.
\end{lemma}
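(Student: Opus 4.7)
The plan is to convert $\Delta_h\log\zeta$ into an integral of $\zeta'/\zeta$, shift the $u$-contour into the half-plane of absolute convergence of the Dirichlet series, and exploit the Fourier-analytic identity $\widehat{K}(\log m)=\tfrac{\pi}{2}\,w_m(\alpha,H)$ to pick out the main term. Since $\log\zeta(\sigma+iz)$ is holomorphic in $z$ near the real axis under RH (as $\sigma>\hh$), we may write
\[\Delta_h\log\zeta(\sigma+i(t+u)) = i\int_{-h}^{h}\frac{\zeta'}{\zeta}(\sigma+i(t+u+r))\,\d r,\]
and Fubini reduces the lemma to uniformly estimating
\[J(r):=\int_{-U}^{U}K(u)\,\frac{\zeta'}{\zeta}(\sigma+i(t+u+r))\,\d u, \qquad U:=(\log T)^3,\]
with $K(u):=(\sin\alpha u/u)^2 e^{iHu}$, the factor $h$ arising from the outer $\int_{-h}^h \d r$.

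To analyze $J(r)$, I would shift the contour from $[-U,U]$ down to $[-U-ic,U-ic]$, where $c:=\tfrac{3}{2}-\sigma$, so that $\Re(s)=\sigma+c=\tfrac{3}{2}$ on the shifted line and the series $-\zeta'/\zeta(s)=\sum_m \Lambda(m)/m^s$ converges absolutely there. Under RH the zero-related poles of $\zeta'/\zeta$ in the $u$-plane lie at $\Im u=\sigma-\hh>0$ (above the contour), while the pole at $s=1$ sits at $\Re u=-(t+r)$, far outside $[-U,U]$ since $t\geq T^\beta$; so the rectangle encloses no poles and the shift produces only vertical side-integrals at $\Re u=\pm U$. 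On the shifted line, termwise integration yields
\[\int_{-U-ic}^{U-ic}K(u)\,\frac{\zeta'}{\zeta}(\sigma+i(t+u+r))\,\d u = -\sum_m \Lambda(m)\,m^{-\sigma-i(t+r)}\,A_m(c),\]
where $A_m(c):=\int_{-U-ic}^{U-ic} K(u)\,m^{-iu}\,\d u$. Because $K$ is entire and $L^1$ on every horizontal line, a second Cauchy shift combined with $\widehat{K}(\log m)=\tfrac{\pi}{2}w_m(\alpha,H)$ gives
\[A_m(c) = \tfrac{\pi}{2}w_m(\alpha,H) - \int_{\substack{\Im u=-c\\ |\Re u|>U}} K(u)\,m^{-iu}\,\d u.\]
The first piece, summed, produces $-\tfrac{\pi}{2}\sum_m \Lambda(m) w_m(\alpha,H) m^{-\sigma-i(t+r)}$, which upon inserting into $i\int_{-h}^h J(r)\,\d r$ and using $\int_{-h}^h m^{-ir}\d r = 2\sin(h\log m)/\log m$ becomes the stated main term.

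What remains is to bound the two error sources uniformly in $r\in[-h,h]$; the outer $\int_{-h}^h \d r$ then furnishes the missing factor $h$. The tail contribution from $A_m(c)$, after interchanging sum and integral on the shifted line (legal by absolute convergence of $\sum \Lambda(m)/m^{3/2}$), becomes
\[\int_{\substack{\Im u=-c\\ |\Re u|>U}} K(u)\,\frac{\zeta'}{\zeta}(\sigma+i(t+u+r))\,\d u,\]
which is bounded by combining $|\zeta'/\zeta(\tfrac{3}{2}+i\tau)|=O(1)$ with $|K(u_1-ic)|\leq e^{(2\alpha+|H|)c}/(u_1^2+c^2)$, giving $\ll e^{2\alpha+|H|}/U$. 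For the vertical sides at $\Re u=\pm U$ one uses the same bound on $K$ together with Lemma \ref{prop}, writing the integral over $v\in[-c,0]$ as $\int_{\sigma}^{\sigma+c}|\zeta'/\zeta(\alpha'+i(t\pm U+r))|\,\d\alpha' \ll (1+|\log(2\sigma-1)|)\log T$, which yields a contribution of order $(1+|\log(2\sigma-1)|) e^{2\alpha+|H|}/(\log T)^5$. Collecting the estimates gives the stated error $O\bigl(h(1+|\log(2\sigma-1)|) e^{2\alpha+|H|}/(\log T)^3\bigr)$. The delicate part — and the step to execute most carefully — is the bound on the vertical sides via Lemma \ref{prop}, which is the source of the factor $(1+|\log(2\sigma-1)|)$; everything else is bookkeeping of Fourier-analytic identities and routine Dirichlet series manipulations on the shifted line.
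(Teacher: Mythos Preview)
Your argument is correct and amounts to a valid reorganization of the paper's proof. The paper starts from Perron's formula on the line $\Re s=1$ (where the Dirichlet series for $\log\zeta$ converges absolutely), obtains the main term there, and then shifts the contour to $\Re s=0$, bounding the horizontal segments $I_2,I_4$ by writing $|\Delta_h\log\zeta|\le \int_{-h}^{h}|\zeta'/\zeta|\,\d r$ and invoking Lemma~\ref{prop}, and the tails $I_1,I_5$ by the decay of the Perron kernel. You instead differentiate at the outset, writing $\Delta_h\log\zeta=i\int_{-h}^{h}\zeta'/\zeta\,\d r$, then shift the $u$-contour from the real axis into the region of absolute convergence $\Im u=-(\tfrac32-\sigma)$, extract the main term via the Fourier identity $\widehat K(\log m)=\tfrac{\pi}{2}w_m(\alpha,H)$, and recover the factor $h$ from the outer $\int_{-h}^{h}\d r$. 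The two routes traverse the same rectangle in opposite directions and use the same two ingredients (Lemma~\ref{prop} for the vertical sides, the $1/|u|^2$ decay of $K$ for the tails on the line of absolute convergence); your packaging has the mild advantage that the provenance of the factor $h$ is visible from the start rather than appearing only inside the bound for $I_2$. One small remark: for the second Cauchy shift you should note that $|m^{-iu}|=m^{-c}$ on the line $\Im u=-c$, which is what makes the interchange of $\sum_m$ and $\int$ legitimate and justifies moving the full-line integral back to the real axis; you implicitly use this but it is worth stating.
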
 
\begin{proof}
Our proof closely follows \cite[Lemma 2]{Ma}, so we have skipped some of the details in the proof. Using the Perron's summation formula, we write
\begin{align} \label{9_20_9am}
\begin{split}
\dfrac{1}{2\pi i}&\int_{1-i\infty}^{1+i\infty}\Delta_h\log\zeta(\sigma+it+s)\bigg(\dfrac{e^{\alpha s}-e^{-\alpha s}}{s}\bigg)^2e^{Hs}\ds \\
& = \displaystyle\sum_{m=2}^\infty \dfrac{\Lambda(m)\,w_m(\alpha,H)}{(\log m)\, m^{\sigma+i(t+h)}}-\displaystyle\sum_{m=2}^\infty \dfrac{\Lambda(m)\,w_m(\alpha,H)}{(\log m)\, m^{\sigma+i(t- h)}}  = -2i\,\displaystyle\sum_{m=2}^\infty \dfrac{\Lambda(m)\,w_m(\alpha,H)\sin(h\log m)}{(\log m)\, m^{\sigma+it}}.
\end{split}
\end{align} 
Note that $T^{\beta-\varepsilon}\leq |t\pm h|\leq T(\log T)^{1+\varepsilon}$, for some $\varepsilon>0$. Since we assume RH, we can move the
path of integration in \eqref{9_20_9am} to lie on the following five paths:
\begin{align*} 
&L_1=\{1+iu:(\log T)^{3}\leq u <\infty\}, \hspace{1.42cm} L_2=\{v+i(\log T)^{3}:0\leq v\leq 1\}, \\
&L_3=\{iu:-(\log T)^{3}\leq u <(\log T)^{3}\}, \hspace{0.86cm} L_4=\{v-i(\log T)^{3}:0\leq v\leq 1\}, \\
&L_5=\{1+iu:-\infty< u \leq -(\log T)^{3}\}.
\end{align*}
For each $1\leq j\leq 5$, we define the integrals
$$
I_j=\dfrac{1}{2\pi i}\int_{L_j}\Delta_h\log\zeta(\sigma + it+s)\bigg(\dfrac{e^{\alpha s}-e^{-\alpha s}}{s}\bigg)^2e^{Hs}\ds.
$$
It takes standard computations to show 
\begin{align*}
|I_1|, |I_5|
\ll \dfrac{h\,e^{2\alpha+|H|}}{(\log T)^{3}}.
\end{align*} 
Now we estimate $I_2$ and the estimate for $I_4$ is similar. Using Fubini's theorem and Lemma~\ref{prop}, we have  
\begin{align*}
|I_2|&\ll\int_{0}^{1}\big|\Delta_h\log\zeta(\sigma+v+i\big(t+(\log T)^{3})\big)\big|\left(\frac{e^{\alpha v}+e^{-\alpha v}}{|v+i(\log T)^3|}\right)^2e^{v |H|}\dv\\
& \ll \frac{e^{2\alpha +|H|}}{(\log T)^6}\int_{0}^{1}\int_{-h}^h\bigg|\dfrac{\zeta'}{\zeta}\big(\sigma +v + i(t+u+ (\log T)^{3})\big)\bigg|\du\,\dv \\
& \ll \frac{e^{2\alpha +|H|}}{(\log T)^6} \int_{-h}^h\int_{0}^{1}\bigg|\dfrac{\zeta'}{\zeta}\big(\sigma +v + i(t+u+ (\log T)^{3})\big)\bigg|\dv\,\du \\
&\ll \dfrac{h(1+ |\log(2\sigma-1)|)e^{2\alpha+|H|}}{(\log T)^{5}}.
\end{align*} 
Finally, the integral $I_3$ gives us the main term:
\begin{align*}
I_3& =\dfrac{1}{2\pi}\int_{-(\log T)^{3}}^{(\log T)^{3}}\Delta_h\log\zeta(\sigma+i(t+u))\bigg(\dfrac{e^{i\alpha u}-e^{-i\alpha u}}{iu}\bigg)^2e^{iHu}\du \\
& =\dfrac{2}{\pi}\int_{-(\log T)^{3}}^{(\log T)^{3}}\Delta_h\log\zeta(\sigma+i(t+u))\bigg(\dfrac{\sin\alpha u}{u}\bigg)^2e^{iHu}\du.
\end{align*}

\end{proof} 
Before obtaining the required convolution formulas for the differences $S_n(\sigma,t+h)-S_n(\sigma,t-h)$ for $n\geq 1$, we need to establish the following connection between $\Delta_h\log\zeta(z)$ and $\Delta_hS_n(\sigma,t)$.

\begin{lemma} \label{2:30_18_4} Assume the Riemann hypothesis. Let $n\geq 1$ be a fixed integer, $\hh\leq \sigma \leq 1$, and $t,h\in\R$ such that $t\neq \pm h$. Then we have
	\begin{equation*} 
	\Delta_hS_n(\sigma,t) = \frac{1}{\pi} \,\,\im{\left\{\dfrac{i^{n}}{(n-1)!}\int_{\sigma}^{2}{\left(v-\sigma\right)^{n-1}\,\,\Delta_h\log\zeta(v+it)}\,\dv\right\}} + O(h).
	\end{equation*}
\end{lemma}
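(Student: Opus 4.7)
The plan is to combine the iterated-integral representation of $S_n(\sigma,t)$ with a direct contour-integration of $\log\zeta$ around a small rectangle near the critical line. By the Cauchy formula for repeated integration applied to the definition of $S_n$, one has
\[
S_n(\sigma,t)-\delta_{n,\sigma}=\dfrac{1}{(n-1)!}\int_0^t(t-\tau)^{n-1}S(\sigma,\tau)\,\d\tau,
\]
from which $\Delta_hS_n(\sigma,t)$ admits, after $n-1$ integrations by parts in the variable $s=\tau-t$, an expression of the form
\[
\Delta_hS_n(\sigma,t)=(-1)^{n-1}\int_{-h}^{h}\dfrac{s^{n-1}}{(n-1)!}\,S(\sigma,t+s)\,\d s+(\text{boundary terms of order }O(h)),
\]
where the boundary terms involve $S_k(\sigma,t\pm h)$ scaled by $h^{n-k}$ for $1\leq k\leq n-1$, each carrying an explicit factor of $h$.

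To bring in the integral $\int_\sigma^{2}(v-\sigma)^{n-1}\Delta_h\log\zeta(v+it)\,\dv$ from the statement, I would apply Cauchy's theorem to the holomorphic function $(z-\sigma-it)^{n-1}\log\zeta(z)/(n-1)!$ on the rectangle $R=[\sigma,2]\times[t-h,t+h]$. For $\sigma>\tfrac12$ and $t\neq\pm h$, the Riemann Hypothesis ensures no zeros of $\zeta$ lie in $R$ and the pole at $s=1$ is avoided, so the contour integral around $\partial R$ vanishes. On the horizontal sides, a binomial expansion of $((v-\sigma)\pm ih)^{n-1}$ produces $-\int_\sigma^{2}(v-\sigma)^{n-1}/(n-1)!\,\Delta_h\log\zeta(v+it)\,\dv$ together with corrections of order $h^k$ for $k\geq 1$, each controlled by the mean-value estimate $|\Delta_h\log\zeta(v+i\tau)|\ll h\,|\zeta'/\zeta(v+i\tilde\tau)|$ combined with Lemma~\ref{prop}; the right vertical side is $O(h)$ since $\log\zeta(2+iu)=O(1)$; and the left vertical side equals $-i^n\int_{t-h}^{t+h}(u-t)^{n-1}/(n-1)!\,\log\zeta(\sigma+iu)\,\du$.

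Multiplying the Cauchy identity by $i^n/\pi$, taking imaginary parts, and using $\pi S(\sigma,u)=\im\log\zeta(\sigma+iu)$, one obtains
\[
\dfrac{1}{\pi}\im\left\{\dfrac{i^n}{(n-1)!}\int_\sigma^{2}(v-\sigma)^{n-1}\Delta_h\log\zeta(v+it)\,\dv\right\}=(-1)^{n+1}\int_{-h}^{h}\dfrac{s^{n-1}}{(n-1)!}\,S(\sigma,t+s)\,\d s+O(h).
\]
Combining this with the integration-by-parts expression for $\Delta_hS_n$ from the first step identifies the right-hand side of the lemma with $\Delta_hS_n(\sigma,t)$ up to an $O(h)$ error, proving the statement for $\sigma>\tfrac12$. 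The boundary case $\sigma=\tfrac12$ then follows by continuity in $\sigma$, invoking the averaging convention for $S$ at zeros on the critical line.

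The main obstacle is the delicate bookkeeping required to show that every correction term—both those arising from the binomial expansion of $((v-\sigma)\pm ih)^{n-1}$ and those arising from the $n-1$ integrations by parts—contributes only $O(h)$ despite nominally involving an unbounded factor $\log\zeta$ or $S_k$. The crucial saving is the explicit power $h^{\geq 1}$ coming from either the binomial coefficient or the IBP boundary evaluation, combined with Lemma~\ref{prop} to control the $\zeta'/\zeta$ integrals. This is exactly the "more delicate computation" referred to in the introduction.
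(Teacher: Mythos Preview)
Your approach has a genuine gap in the error analysis. You claim that the boundary terms from the $n-1$ integrations by parts, namely $h^{n-k}S_k(\sigma,t\pm h)$ for $1\le k\le n-1$, are each $O(h)$; but under RH one only has $S_k(\sigma,t)=O(\log t/(\log\log t)^{k+1})$, so the $k=n-1$ term is of size $h\cdot S_{n-1}(\sigma,t\pm h)=O(h\log t)$, not $O(h)$. Likewise, in the binomial expansion of $((v-\sigma)\pm ih)^{n-1}$, the odd-$j$ terms produce \emph{sums} $\log\zeta(v+i(t+h))+\log\zeta(v+i(t-h))$ rather than differences, so your mean-value trick $|\Delta_h\log\zeta|\ll h\,|\zeta'/\zeta|$ does not apply to them; these corrections are $O(h^j\log t)$, hence $O(h\log t)$ for $j=1$. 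An $O(h\log t)$ error in this lemma would be fatal for the downstream application in Proposition~\ref{9_21_4:20pm}, where it would swamp the main term.

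In fact your two families of error terms are not individually small but cancel against one another: the binomial correction at level $j$ is, up to a tail from $v\ge 2$, exactly $\pi^{-1}\im\{i^{n-j}\int_\sigma^\infty(v-\sigma)^{n-1-j}\log\zeta(v+i\tau)\,\dv/(n-1-j)!\}=S_{n-j}(\sigma,\tau)$, which matches the corresponding IBP boundary term. Showing this cancellation amounts to re-deriving, level by level, the exact identity
\[
S_n(\sigma,t)=\frac{1}{\pi}\,\im\Bigl\{\frac{i^n}{(n-1)!}\int_\sigma^\infty(v-\sigma)^{n-1}\log\zeta(v+it)\,\dv\Bigr\}
\]
from \cite[Lemma~6]{CChiM}. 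The paper simply quotes this identity, takes the difference at $t\pm h$, truncates the $v$-integral at $2$, and bounds the tail $\int_2^\infty$ directly via the absolutely convergent Dirichlet series for $\log\zeta$ and $|\sin(h\log m)|\le h\log m$; this yields a clean $O(h)$ with constant independent of $t$. Your route can be made to work, but only after proving those exact cancellations---and at that point you have reproduced the cited lemma by a longer road.
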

\begin{proof}
For $t \neq 0$, integration by parts on \cite[Lemma 6]{CChiM} gives 
	\begin{equation*}
	S_n(\sigma,t) = \frac{1}{\pi} \,\,\im{\left\{\dfrac{i^{n}}{(n-1)!}\int_{\sigma}^{\infty}{\left(v-\sigma\right)^{n-1}\,\log\zeta(v+it)}\,\dv\right\}}.
	\end{equation*}
	So we have 
	\begin{align*}
	\Delta_hS_n(\sigma,t) & = \frac{1}{\pi} \,\,\im{\left\{\dfrac{i^{n}}{(n-1)!}\int_{\sigma}^{2}{\left(v-\sigma\right)^{n-1}\,\Delta_h\log\zeta(v+it)}\,\dv\right\}} \\
	& \,\,\,\,\, \,+ \frac{1}{\pi} \,\,\im{\left\{\dfrac{i^{n}}{(n-1)!}\int_{2}^{\infty}{\left(v-\sigma\right)^{n-1}\,\Delta_h\log\zeta(v+it)}\,\dv\right\}},	
	\end{align*} 
	for  $t\neq \pm h$.
	Finally the bound on the error term follows from the following estimate
	\begin{align*} 
		\int_{2}^{\infty}{\left(v-\sigma\right)^{n-1}\,\big|\Delta_h\log\zeta(v+it)}\big|\dv &=\int_{2}^{\infty}\left(v-\sigma\right)^{n-1}\,\Bigg|\displaystyle\sum_{m=2}^\infty \dfrac{\Lambda(m)}{(\log m)\, m^{v+it}}\bigg(\dfrac{1}{m^{ih}}-\dfrac{1}{m^{-ih}}\bigg)\Bigg|\dv \\
	& \ll\int_{2}^{\infty}\left(v-\sigma\right)^{n-1}\,\displaystyle\sum_{m=2}^\infty \dfrac{\Lambda(m)}{(\log m)\, m^{v+1}}|\sin(h\log m)|\dv \\
	& \leq h\,\displaystyle\sum_{m=2}^{\infty}\dfrac{\Lambda(m)}{m^{\sigma+1}}\int_{2}^{\infty}\dfrac{\left(v-\sigma\right)^{n-1}}{m^{v-\sigma}}\,\dv \ll h.
	\end{align*}
 \end{proof}
Note that for $\delta\in\{-1,1\}$ and $\delta'\in\{-1,0,1\}$, the function
$$
x\mapsto 3\,\delta+2\,\delta'\sin(x) 
$$
is completely positive or completely negative.
\begin{proposition} \label{9_21_4:20pm} Assume the Riemann hypothesis. Consider the following two cases: 
\begin{itemize}
 \item [(i)] either we have
 \[n\geq 1 \quad \text{ and } \quad \hh\leq\sigma<1,\]
 \item[(ii)] or
 \[n=0 \quad \text{ and } \quad \hh <\sigma<1.\]
\end{itemize}
Let $\beta, \gamma, \delta, \delta'\in\R$ be fixed parameters such that $0<\beta<1$, $\delta \in\{-1, 1\}$, and we further consider $\gamma, \delta'$ in the following two cases:
\begin{itemize}
 \item[(i')] either
 \[0<\gamma\leq  \hh \quad \text{ and } \quad \delta'\in\{-1,1\},\] 
 \item[(ii')]or  
 \[  \hh<\gamma\leq 1 \quad \text{ and } \quad \delta'=0.\]
\end{itemize}
Then for sufficiently large $T$, $T^\beta\leq t\leq T\log T$\ and $0\leq h\leq 1$, we have
\begin{align} \label{22_9_29_12}
\begin{split}
&\int_{-(\log T)^{3}}^{(\log T)^{3}} \Delta_hS_n(\sigma,t+u)\bigg(\dfrac{\sin(\gamma u\log_2T)}{u}\bigg)^2\big(3\,\delta+2\,\delta'\sin(u\log_2T)\big)\du \\
& \,\,\,\,\,\,\,\,\,\,\,\,\,\,\,\,\,\,\,\,\,= \im\Bigg\{3i^{n+3}\,\delta\displaystyle\sum_{m=2}^{\infty}\dfrac{\Lambda(m)\,a_{m}(T,h)}{(\log m)^{n+1}m^{\sigma+it}}\Bigg\}+\im\Bigg\{i^{n+2}\,\delta'\displaystyle\sum_{m=2}^{\infty}\dfrac{\Lambda(m)\,b_{m}(T,h)}{(\log m)^{n+1}m^{\sigma+it}}\Bigg\} + O(h\log_2T),
\end{split}
\end{align}
where the functions $a_{m}(T,h)$ and $b_{m}(T,h)$ are defined by
\begin{equation*} 
a_{m}(T,h)=w_m(\gamma\log_2T,0)\sin(h\log m) \hspace{0.3cm} \mbox{and} \hspace{0.3cm}
b_{m}(T,h)=w_m(\gamma\log_2T,\log_2T)\sin(h\log m).
\end{equation*}
\end{proposition}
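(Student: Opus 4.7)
The plan is to combine Lemma~\ref{2:30_18_4} (or its $n=0$ analog) with three applications of Lemma~\ref{9_20_10:05am} after a Fourier-type decomposition of the kernel. Setting $f(u):=(\sin(\gamma u\log_2T)/u)^2$ and using $2\sin(u\log_2T)=-i(e^{iu\log_2T}-e^{-iu\log_2T})$, the kernel splits as
\[
K(u):=f(u)\big(3\delta+2\delta'\sin(u\log_2T)\big)=3\delta\,f(u)-i\delta'\,f(u)e^{iu\log_2T}+i\delta'\,f(u)e^{-iu\log_2T}.
\]
For $n\geq 1$, insert the identity of Lemma~\ref{2:30_18_4} applied to $\Delta_hS_n(\sigma,t+u)$ into the LHS of \eqref{22_9_29_12} and interchange the (bounded) integrations in $u$ and $v$. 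Since $|K(u)|\leq 5f(u)$ and $\int f(u)\,\du\ll\gamma\log_2T$, the $O(h)$-remainder from Lemma~\ref{2:30_18_4} contributes $O(h\log_2T)$ to the error. For $n=0$, the corresponding identity is $\pi\Delta_hS_0(\sigma,t+u)=\im\Delta_h\log\zeta(\sigma+i(t+u))$ and there is no $v$-integration.

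Next, apply Lemma~\ref{9_20_10:05am} (with $\alpha=\gamma\log_2T$ and $H\in\{0,\log_2T,-\log_2T\}$, with $\sigma$ replaced by $v$) to each of the three pieces. The third piece vanishes identically because
\[
w_m(\gamma\log_2T,-\log_2T)=\max\{0,2\gamma\log_2T-(\log_2T+\log m)\}=0\quad\text{for every }m\geq 2,
\]
thanks to $\gamma\leq 1/2$ in case (i'); in case (ii') the pieces with $H=\pm\log_2T$ disappear since $\delta'=0$. The surviving contributions produce Dirichlet polynomials with weights $a_m(T,h)$ (from $H=0$) and $b_m(T,h)$ (from $H=\log_2T$). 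Integrating the resulting sum in $v$ via
\[
\int_\sigma^2(v-\sigma)^{n-1}m^{-v-it}\,\dv=\frac{(n-1)!}{(\log m)^n\,m^{\sigma+it}}+O\bigg(\frac{1}{(\log m)\,m^{2+it}}\bigg),
\]
and combining with the prefactor $i^n/((n-1)!\pi)$ together with the identities $-3i^{n+1}=3i^{n+3}$ and $-i^n=i^{n+2}$, produces the main term of \eqref{22_9_29_12}. The $O(m^{-(2-\sigma)})$ tails are absolutely summable because $a_m,b_m$ are supported on $\log m\ll\log_2T$, hence on $O((\log T)^{2\gamma+1})$ integers.

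For the error analysis, Lemma~\ref{9_20_10:05am} contributes a per-$v$ error $O(h(1+|\log(2v-1)|)e^{(2\gamma+1)\log_2T}/(\log T)^3)\ll h(1+|\log(2v-1)|)/\log T$, and the outer $v$-integration against $(v-\sigma)^{n-1}$ converges at $v=1/2$ since $\int_{1/2}^2|\log(2v-1)|\,\dv<\infty$, totaling $O(h/\log T)\subset O(h\log_2T)$. The principal technical obstacle is extracting the explicit factor $h$ in each error contribution rather than settling for mere boundedness; this rests on the identity $\Delta_h\log\zeta(v+i\tau)=i\int_{-h}^h(\zeta'/\zeta)(v+i(\tau+\eta))\,\d\eta$ combined with Lemma~\ref{prop} to control $\int_\sigma^2|(\zeta'/\zeta)(v+i\cdot)|\,\dv\ll(1+|\log(2\sigma-1)|)\log T$. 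The borderline $\sigma=1/2$ (allowed only when $n\geq 1$) is absorbed by integrability of $|\log(2v-1)|$ at $v=1/2$, and the condition $\gamma\leq 1/2$ in case (i') is precisely the threshold that makes $K(u)$ completely positive or completely negative by forcing the $H=-\log_2T$ piece to vanish.
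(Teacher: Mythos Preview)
Your proposal is correct and follows essentially the same route as the paper: decompose the kernel into exponentials, apply Lemma~\ref{9_20_10:05am} with $H\in\{0,\pm\log_2T\}$, use $w_m(\gamma\log_2T,-\log_2T)=0$ when $\gamma\le\tfrac12$ (or $\delta'=0$ in case~(ii')), and for $n\ge1$ integrate against $(v-\sigma)^{n-1}$ via Lemma~\ref{2:30_18_4} and the explicit evaluation of $\int_\sigma^2(v-\sigma)^{n-1}m^{-v}\,\dv$, exactly as the paper does. One minor slip in your closing remark: the sign-definiteness of $K(u)$ comes from $|2\delta'\sin|\le 2<3$ and has nothing to do with $\gamma$; the condition $\gamma\le\tfrac12$ serves only to annihilate the $H=-\log_2T$ Dirichlet contribution.
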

\begin{proof}
We apply Lemma~\ref{9_20_10:05am} with $\alpha=\gamma\log_2T$, and $H=0$, $H=\log_2T$ and $H=-\log_2T$. Using the linear combination 
$$
3\,\delta+2\,\delta'\sin(u\log_2T)=3\,\delta\, e^{0} -i\,\delta'\big(e^{iu\log_2T}-e^{-iu\log_2T}\big),
$$
we obtain
\begin{align} \label{9_20_11:23am}
\begin{split}
\int_{-(\log T)^{3}}^{(\log T)^{3}}&\Delta_h\log\zeta(v+i(t+u))\bigg(\dfrac{\sin(\gamma u\log_2T)}{u}\bigg)^2\big(3\,\delta+2\,\delta'\sin(u\log_2T)\big)\du\\
& = -3\pi i\,\delta\displaystyle\sum_{m=2}^{\infty}\dfrac{\Lambda(m)\,a_m(T,h)}{(\log m)\,m^{v+it}}-\pi\,\delta'\displaystyle\sum_{m=2}^{\infty}\dfrac{\Lambda(m)\,b_m(T,h)}{(\log m)\,m^{v+it}}+ O\bigg(\dfrac{h(1+|\log(2v-1)|)}{(\log T)^{2-2\gamma}}\bigg),
\end{split}
\end{align}
when $\hh<v<2$.
Note that for $0<\gamma\leq \hh$, we have used that $w_m(\gamma\log_2T,-\log_2T)=0$ for all $m\geq 2$. When $\gamma>\hh$ and $\delta'=0$, only the first sum on the right-hand side of \eqref{9_20_11:23am} remains. To obtain the case $n=0$, we take the imaginary part in \eqref{9_20_11:23am}. When $n\geq 1$, we want to use Lemma \ref{2:30_18_4} in \eqref{9_20_11:23am}. For $\hh\leq \sigma <1$, using Fubini's theorem (justified by \cite[Eq. (2.13)]{T} and the fact that the sums involved in \eqref{9_20_11:23am} are finite) we get,
\begin{align}  \label{9_26_3:18pm}
\begin{split} 
&\int_{-(\log T)^{3}}^{(\log T)^{3}} \bigg\{\int_{\sigma}^{2}(v-\sigma)^{n-1}\,\Delta_h\log\zeta(v+i(t+u))\,\dv\bigg\}\bigg(\dfrac{\sin(\gamma u\log_2T)}{u}\bigg)^2\big(3\,\delta+2\,\delta'\sin(u\log_2T)\big)\du \\
& \,\,\,\,\,\,\,\,\,\,\,\,\,= -\displaystyle\sum_{m=2}^{\infty}\dfrac{\Lambda(m)}{(\log m)m^{it}}\Bigg[\big(3\pi i\,\delta a_m(T,h)+\pi\delta'b_m(T,h)\big)\int_{\sigma}^{2}\dfrac{(v-\sigma)^{n-1}}{m^{v}}\dv \Bigg] + O\bigg(\dfrac{h}{(\log T)^{2-2\gamma}}\bigg).
\end{split}
\end{align}
Using  \cite[\S 2.321 Eq. 2] {GR}, we obtain 
\begin{align*} 
\int_{\sigma}^{2}{\dfrac{(v-\sigma)^{n-1}}{m^{v}}\,}\,\dv = \dfrac{\beta_{n-1}}{m^\sigma (\log m)^n} - \dfrac{1}{m^{2}}\displaystyle\sum_{k=0}^{n-1}\dfrac{\beta_k}{(\log m)^{k+1}}(2-\sigma)^{n-1-k},
\end{align*}
where $\beta_k=\frac{(n-1)!}{(n-1-k)!}$. This implies that for each $m\geq 2$, we have
\begin{align*}
\int_{\sigma}^{2}\dfrac{(v-\sigma)^{n-1}}{m^{v}}\,\dv & = \dfrac{(n-1)!}{m^{\sigma}(\log m)^{n}} + O\bigg(\dfrac{1}{m^2\log m}\bigg).
\end{align*}
Inserting this in \eqref{9_26_3:18pm} and using the estimates $|a_m(T,h)|, |b_m(T,h)|\ll h\log m\log_2T$, it follows that
\begin{align*}  
\int_{-(\log T)^{3}}^{(\log T)^{3}} &\Bigg\{\int_{\sigma}^{2}\left(v-\sigma\right)^{n-1}\,\Delta_h\log\zeta(v+i(t+u))\,\dv\Bigg\}\bigg(\dfrac{\sin(\gamma u\log_2T)}{u}\bigg)^2\big(3\,\delta+2\,\delta'\sin(u\log_2T)\big)\du \\
& = -3\pi(n-1)!\,\delta i\displaystyle\sum_{m=2}^{\infty}\dfrac{\Lambda(m)\,a_m(T,h)}{(\log m)^{n+1}m^{\sigma+it}}-\pi(n-1)!\,\delta'\displaystyle\sum_{m=2}^{\infty}\dfrac{\Lambda(m)\,b_m(T,h)}{(\log m)^{n+1}m^{\sigma+it}}  + O(h\log_2T).
\end{align*}
Finally the proof follows by using Lemma \ref{2:30_18_4} and calculating the error terms.
\end{proof}

\section{The Resonator}
In this section we recall the resonator $|R(t)|^2$ developed in \cite[Section 3]{Chi}. Let
\begin{align}  \label{9_22_2:21pm}
R(t)=\displaystyle\sum_{m\in\mathcal{M}'}\dfrac{r(m)}{m^{it}},
\end{align}
and $\mathcal{M}'$ be a suitable finite set of integers. Let $\sigma$ be a positive real number and $N$ be a positive integer sufficiently large such that 
\begin{align} \label{1_5_9:20pm}
\dfrac{1}{2} \leq \sigma\leq \dfrac{1}{2}+\dfrac{1}{\log\log N}.
\end{align}
Let $\mathcal{P}$ be the set of prime numbers $p$ such that
\begin{align} \label{20_4_6:46pm}
e\log N\log_{2}N < p \leq \exp\big((\log_2N)^{1/8}\big)\log N\log_2N.
\end{align}
We define $f(n)$ as a multiplicative function supported on square-free numbers such that
$$
f(p):=\bigg({\dfrac{(\log N)^{1-\sigma}{(\log_2N)}^{\sigma}}{(\log_3N)^{1-\sigma}}}\bigg)\dfrac{1}{p^{\sigma}\,(\log p-\log_2N-\log_3N)}
$$
for $p\in \mathcal{P}$, and $f(p)=0$ otherwise. For each $k\in\big\{1, ...,\big[(\log_2N)^{1/8}\big]\big\}$, we define the following sets:
\begin{align*}  
P_k:=\big\{p\in \mathcal{P}:  \hspace{0.1cm} e^k\log N\log_2N<p\leq e^{k+1}\log N\log_2N\big\},
\end{align*}
\begin{align*}
M_k:=\bigg\{n\in\supp(f): n  \hspace{0.1cm} \mbox{has at least} \hspace{0.1cm} \alpha_{k}:=\frac{3(\log N)^{2-2\sigma}}{k^2(\log_3N)^{2-2\sigma}} \hspace{0.1cm} \mbox{prime divisors in}  \hspace{0.1cm} P_k\bigg\},
\end{align*}
and 
\begin{align*}
\mathcal{M}:=\supp(f) \backslash \bigcup_{k=1}^{[(\log_2N)^{1/8}]}M_k .
\end{align*}

\subsection{Construction of the resonator} \label{subsection}
Let $0\leq \beta<1$ and $\kappa=(1-\beta)/2$. Note that $\kappa+\beta<1$. Let
\begin{align}  \label{17_24}
\dfrac{1}{2}\leq \sigma\leq \dfrac{1}{2}+\dfrac{1}{\log\log T},
\end{align}
and $N=[T^{\kappa}]$, so that $\sigma$ and $N$ satisfy the relation \eqref{1_5_9:20pm}. Let $\mathcal{J}$ be the set of integers $j$ such that
$$
\Big[\big(1+T^{-1}\big)^{j},\big(1+T^{-1}\big)^{j+1}\Big)\bigcap \mathcal{M} \neq \emptyset,
$$
and we define $m_j$ to be the minimum of $\big[(1+T^{-1})^{j},(1+T^{-1})^{j+1}\big)\cap \mathcal{M}$ for $j$ in $\mathcal{J}$. Consider the set
$$
\mathcal{M}':=\{m_j:j\in\mathcal{J}\},
$$
and finally we define
$$
r(m_j):=\Bigg(\displaystyle\sum_{n\in\mathcal{M},(1+T^{-1})^{j-1}\leq n \leq (1+T^{-1})^{j+2}}f(n)^2\Bigg)^{1/2}
$$
for every $m_j\in\mathcal{M}'$. This defines our Dirichlet polynomial in \eqref{9_22_2:21pm}. 
\\

 Let $\Phi(t):=e^{-t^2/2}$. We collect the following results proved in \cite[Section 3]{Chi}.
\begin{proposition} \label{19_4_6:04pm}
With the notations as above, we have
\begin{enumerate}[(i)]
\item $|R(t)|^2 \leq R(0)^2\ll T^{\kappa}\displaystyle\sum_{l\in\mathcal{M}}f(l)^2$,
\item $
\int_{-\infty}^{\infty}|R(t)|^{2}\,\Phi\bigg(\dfrac{t}{T}\bigg)\,\dt \ll T\displaystyle\sum_{l\in\mathcal{M}}f(l)^2.
$
\end{enumerate}
\end{proposition}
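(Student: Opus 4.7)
The plan is to verify both estimates following the resonator framework of Bondarenko--Seip, adapted as in \cite[Section 3]{Chi}. The two inequalities are essentially quoted from that source, and their proofs rest on three standard ingredients: the pointwise triangle inequality for the Dirichlet polynomial $R(t)$, the fact that each $l\in\mathcal{M}$ lies in at most three of the enlarged intervals $[(1+T^{-1})^{j-1},(1+T^{-1})^{j+2})$ used to define $r(m_j)^2$, and a combinatorial bound $|\mathcal{M}| \ll T^\kappa$ stemming from the multiplicity caps $\alpha_k$.

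For part (i), the triangle inequality applied to \eqref{9_22_2:21pm} gives $|R(t)| \leq R(0)$ pointwise, since $|m^{-it}|=1$. Writing $R(0) = \sum_{j\in\mathcal{J}} r(m_j)$ and applying Cauchy--Schwarz yields
\[
R(0)^2 \leq |\mathcal{J}| \sum_{j\in\mathcal{J}} r(m_j)^2 \leq 3\,|\mathcal{J}|\sum_{l\in\mathcal{M}} f(l)^2,
\]
where the factor $3$ reflects the triple overlap of the enlarged intervals. Since distinct $m_j\in\mathcal{M}'$ are distinct integers in $\mathcal{M}$, one has $|\mathcal{J}|\leq|\mathcal{M}|$; a standard combinatorial estimate of square-free integers with fewer than $\alpha_k$ prime factors in each $P_k$ then gives
\[
|\mathcal{M}| \leq \prod_{k} \sum_{r<\alpha_k}\binom{|P_k|}{r} \ll T^{\kappa},
\]
after inserting the prime-number-theorem value of $|P_k|$ and the explicit $\alpha_k$. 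This produces the required bound on $R(0)^2$.

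For part (ii), I would expand $|R(t)|^2 = \sum_{j,k\in\mathcal{J}} r(m_j)r(m_k)(m_k/m_j)^{it}$, interchange the finite double sum with the integration against $\Phi(t/T)=e^{-t^2/2}$, and apply
\[
\int_{-\infty}^{\infty} e^{-t^2/(2T^2)}(m_k/m_j)^{it}\,\dt = T\sqrt{2\pi}\,e^{-T^2\log^2(m_j/m_k)/2}.
\]
The diagonal $j=k$ contributes $T\sqrt{2\pi}\sum_j r(m_j)^2 \ll T\sum_{l\in\mathcal{M}} f(l)^2$ by the same triple-overlap observation. For the off-diagonal with $|j-k|\geq 2$, the interval construction forces $\log(m_j/m_k) \gg |j-k|/T$, so the Gaussian decays like $e^{-c(j-k)^2}$; summing in $d=j-k$ and using Cauchy--Schwarz on $\sum_j r(m_j)r(m_{j+d})$ reduces the off-diagonal to a constant multiple of the diagonal. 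The adjacent pairs $|j-k|=1$, where the Gaussian factor is only bounded by $1$, are likewise controlled by Cauchy--Schwarz on consecutive terms.

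The main technical obstacle is the counting estimate $|\mathcal{M}| \ll T^\kappa$. This is precisely what forces the quantitative choices of $\alpha_k = 3(\log N)^{2-2\sigma}/(k^2(\log_3 N)^{2-2\sigma})$, the range of $k$ up to $[(\log_2 N)^{1/8}]$, and the size of the primes in $\mathcal{P}$ defined by \eqref{20_4_6:46pm}. All these parameters are fine-tuned in \cite[Section 3]{Chi} so that the length of the resulting Dirichlet polynomial remains compatible with the hypothesis of \cite[Lemma 13]{BS} in later applications.
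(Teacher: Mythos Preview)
Your sketch is correct and faithfully reproduces the argument behind \cite[Proposition 11 and Lemma 12]{Chi}, which is exactly what the paper does here: its entire proof is the one-line citation ``See \cite[Proposition 11]{Chi} and \cite[Lemma 12]{Chi}.'' You have simply unpacked that citation, and the three ingredients you identify (triangle inequality plus Cauchy--Schwarz, the triple-overlap bound $\sum_j r(m_j)^2\leq 3\sum_{l\in\mathcal M}f(l)^2$, and the combinatorial estimate $|\mathcal M|\ll N=T^\kappa$) are precisely the ones used there; your treatment of the off-diagonal in (ii) via the Gaussian decay $e^{-cT^2\log^2(m_j/m_k)}\ll e^{-c(j-k)^2}$ for $|j-k|\ge 2$ is also the standard one.
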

\begin{proof}
See \cite[Proposition 11]{Chi} and \cite[Lemma 12]{Chi}. 
\end{proof}

\begin{lemma} \label{20_41_1_20}
Suppose 
	$$
	G(t):=\displaystyle\sum_{n=2}^\infty\dfrac{\Lambda(n)\,c_n}{(\log n)n^{\sigma+it}}
	$$	
	is absolutely convergent and $c_n\geq 0$ for $n\geq 2$. Then
	$$	
	\int_{-\infty}^{\infty}G(t)|R(t)|^{2}\,\Phi\bigg(\dfrac{t}{T}\bigg)\,\dt \gg T\,\dfrac{(\log T)^{1-\sigma}(\log_3T)^{\sigma}}{(\log_2T)^{\sigma}}\bigg(\min_{p\in\mathcal{P}}c_p\bigg)\displaystyle\sum_{l\in\mathcal{M}}f(l)^2.
	$$
\end{lemma}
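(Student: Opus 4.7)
The plan is to follow the resonance strategy of Bondarenko--Seip \cite{BS}, adapted to the smoothed resonator of \cite[Section~3]{Chi}. Opening the square $|R(t)|^2 = \sum_{j,j'\in\mathcal{J}} r(m_j)r(m_{j'})(m_{j'}/m_j)^{it}$ and the Dirichlet series defining $G(t)$, then interchanging summation and integration and using the Fourier identity $\int_{-\infty}^{\infty}\Phi(t/T)e^{it\xi}\dt = T\sqrt{2\pi}\,e^{-T^{2}\xi^{2}/2}$, I find
\[
\int_{-\infty}^{\infty} G(t)|R(t)|^{2}\,\Phi\!\left(\dfrac{t}{T}\right)\dt = T\sqrt{2\pi}\sum_{n\ge 2}\sum_{j,j'\in\mathcal{J}}\dfrac{\Lambda(n)\,c_n\,r(m_j)\,r(m_{j'})}{(\log n)\,n^\sigma}\exp\!\left(-\tfrac12 T^{2}\log^{2}\!\left(\tfrac{m_{j'}}{n m_j}\right)\right).
\]
All summands are non-negative, so I may lower-bound by keeping only $n=p$ prime with $p\in\mathcal{P}$ and only those pairs $(j,j')$ for which there exists $m\in I_j\cap\mathcal{M}$ with $p\nmid m$ and $pm\in I_{j'}\cap\mathcal{M}$, where $I_k := [(1+T^{-1})^{k},(1+T^{-1})^{k+1})$. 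Since $m_j\in I_j$ and $m_{j'}\in I_{j'}$ by construction of $\mathcal{M}'$, the ratio $m_{j'}/(pm_j)$ lies within a factor $(1+T^{-1})^{2}$ of unity, whence $|T\log(m_{j'}/(pm_j))|\ll 1$ and the Gaussian weight is $\gg 1$.

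Second, I would lower-bound $r(m_j)\,r(m_{j'})$ via the Cauchy--Schwarz inequality. From the construction one has $r(m_j)^2 \ge \sum_{m\in I_j\cap\mathcal{M}} f(m)^2$ (and similarly for $r(m_{j'})^2$), hence
\[
r(m_j)\,r(m_{j'}) \,\ge\, \sum_{\substack{m\in I_j\cap\mathcal{M}\\ p\nmid m,\ pm\in I_{j'}\cap\mathcal{M}}} f(m)\,f(pm),
\]
and multiplicativity of $f$ on squarefrees gives $f(pm)=f(p)f(m)$ under $p\nmid m$. Since each $m\in\mathcal{M}$ determines its interval $I_j$ and $pm$ its interval $I_{j'}$ uniquely, summing over $(j,j')$ collapses into a single sum:
\[
\int_{-\infty}^{\infty} G(t)|R(t)|^{2}\,\Phi(t/T)\dt \,\gg\, T\sum_{p\in\mathcal{P}}\dfrac{c_p\, f(p)}{p^\sigma}\sum_{\substack{m\in\mathcal{M}\\ p\nmid m,\ pm\in\mathcal{M}}} f(m)^{2}.
\]

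The crux---which I expect to be the principal obstacle---is the density estimate
\[
\sum_{\substack{m\in\mathcal{M}\\ p\nmid m,\ pm\in\mathcal{M}}} f(m)^{2} \,\gg\, \sum_{m\in\mathcal{M}} f(m)^{2} \qquad \text{uniformly in } p\in\mathcal{P}.
\]
The constraint $p\mid m$ excises only a proportion $f(p)=o(1)$ of the mass. For the constraint $pm\notin\mathcal{M}$, note that if $p\in P_k$, then $pm\in M_l$ forces $m$ to possess at least $\alpha_l-\mathbf{1}_{l=k}$ prime divisors in $P_l$; a Chebyshev/Markov-type bound on the $f^{2}$-weighted mass of such $m$, along the lines of \cite[Proposition~11]{Chi}, yields a saving $\exp(-c\,\alpha_l)$, and summing over $1\le l\le (\log_2 N)^{1/8}$ produces the required uniform density.

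Finally, I would evaluate the prime sum. Inserting the explicit formula for $f(p)$ and writing $\mathcal{P}=\bigcup_{k\ge 1} P_k$, the prime number theorem yields $\#P_k\asymp e^{k}\log N$ and $\log p - \log_2 N - \log_3 N\asymp k$ on $P_k$, so
\[
\sum_{p\in P_k}\dfrac{f(p)}{p^\sigma} \,\asymp\, \dfrac{(\log N)^{2-3\sigma}}{(\log_2 N)^{\sigma}(\log_3 N)^{1-\sigma}}\cdot\dfrac{e^{(1-2\sigma)k}}{k}.
\]
Since $\sigma-\tfrac12 = O(1/\log_2 N)$ and $k\le (\log_2 N)^{1/8}$, both $e^{(1-2\sigma)k}$ and $(\log N)^{2-3\sigma}/(\log N)^{1-\sigma}=(\log N)^{1-2\sigma}$ are $O(1)$, while summing $1/k$ produces a harmonic factor $\asymp \log_3 N$. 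Consequently,
\[
\sum_{p\in\mathcal{P}}\dfrac{f(p)}{p^\sigma} \,\gg\, \dfrac{(\log N)^{1-\sigma}(\log_3 N)^{\sigma}}{(\log_2 N)^{\sigma}}.
\]
Factoring $\min_{p\in\mathcal{P}} c_p$ out of the prime sum and using $\log_j N\asymp \log_j T$ (since $N=[T^{\kappa}]$) yields the claimed lower bound.
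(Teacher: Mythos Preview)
Your proposal is correct and reconstructs precisely the argument of \cite[Lemma~13]{Chi} (the Bondarenko--Seip resonance mechanism adapted to the smoothed resonator), which is all the paper invokes. One minor sharpening: once $m\in\mathcal{M}$ and $p\in P_k$, the constraint $pm\in M_l$ is only possible for $l=k$, so no sum over $l$ is needed in the density step---but this does not affect the validity of your sketch.
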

\begin{proof}
	See \cite[Lemma 13]{Chi}.
\end{proof}

The following result allows us to obtain the error terms in our theorems.

\begin{lemma}\label{9_22_6:02pm2} Assume the Riemann hypothesis, and consider the parameters defined in Proposition \ref{9_21_4:20pm}. Then
	$$\Bigg|\displaystyle\sum_{m=2}^\infty\dfrac{\Lambda(m)\,a_{m}(T,h)}{(\log m)^{n+1}m^{\sigma}}\bigg(\int_{0}^{\infty}m^{-it}|R(t)|^2\Phi\bigg(\dfrac{t}{T}\bigg)\dt\bigg)\Bigg| \ll h\,T\,\dfrac{(\log T)^{2\gamma(1-\sigma)}}{(\log_2T)^{n-1}} \displaystyle\sum_{l\in\M}f(l)^2.$$
\end{lemma}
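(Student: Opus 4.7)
The plan is to bound the expression on the left directly, using the triangle inequality in $m$, a crude $L^1$-bound on the integral, and an elementary estimate on the resulting truncated Dirichlet sum, which is where the factor of $h$ appears.

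First I would pull the absolute value inside the sum over $m$, using $|m^{-it}|=1$ and $\Phi(t/T)\geq 0$, to obtain
\begin{equation*}
\Bigg|\sum_{m=2}^\infty \frac{\Lambda(m)\,a_m(T,h)}{(\log m)^{n+1} m^\sigma}\int_0^\infty m^{-it}|R(t)|^2\Phi(t/T)\dt\Bigg| \leq \left(\sum_{m=2}^\infty \frac{\Lambda(m)\,|a_m(T,h)|}{(\log m)^{n+1} m^\sigma}\right)\int_{-\infty}^\infty |R(t)|^2\Phi(t/T)\dt.
\end{equation*}
Proposition~\ref{19_4_6:04pm}(ii) immediately gives $\int_{-\infty}^\infty |R(t)|^2\Phi(t/T)\dt \ll T\sum_{l\in\mathcal{M}}f(l)^2$, so the only content lies in estimating the remaining Dirichlet sum.

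The heart of the argument is the following observation. Recall $a_m(T,h) = w_m(\gamma\log_2 T, 0)\sin(h\log m)$ with $w_m(\gamma\log_2 T,0)=\max\{0,2\gamma\log_2 T-\log m\}$. Two estimates are immediate: $|w_m(\gamma\log_2 T,0)|\leq 2\gamma\log_2 T$, and $|\sin(h\log m)|\leq h\log m$; multiplying gives $|a_m(T,h)|\leq 2\gamma h(\log_2 T)(\log m)$. Moreover $a_m$ is supported on $m<(\log T)^{2\gamma}$. Hence
\begin{equation*}
\sum_{m=2}^\infty \frac{\Lambda(m)\,|a_m(T,h)|}{(\log m)^{n+1} m^\sigma} \ll h(\log_2 T)\sum_{m<(\log T)^{2\gamma}}\frac{\Lambda(m)}{(\log m)^n m^\sigma}.
\end{equation*}
Setting $X=(\log T)^{2\gamma}$, Chebyshev's bound $\psi(x)\ll x$ together with partial summation yields
\begin{equation*}
\sum_{m\leq X}\frac{\Lambda(m)}{(\log m)^n m^\sigma}\ll \frac{X^{1-\sigma}}{(1-\sigma)(\log X)^n}\ll \frac{(\log T)^{2\gamma(1-\sigma)}}{(\log_2 T)^n},
\end{equation*}
where I use $\log X = 2\gamma\log_2 T$ and the fact that $1-\sigma\asymp 1$ thanks to \eqref{17_24}. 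Combining the three estimates produces exactly $hT(\log T)^{2\gamma(1-\sigma)}(\log_2 T)^{1-n}\sum_{l\in\mathcal{M}}f(l)^2$, as required.

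No genuine obstacle is expected. The factor $h$ on the right-hand side comes cleanly from $|\sin(h\log m)|\leq h\log m$, the exponent $2\gamma(1-\sigma)$ comes from the truncation $m<(\log T)^{2\gamma}$ imposed by the support of $w_m$, and the power of $\log_2 T$ is determined by the $(\log m)^n$ in the denominator together with the single $\log_2 T$ lost in the trivial bound on $w_m$. The only judgement call is to use the crude $L^1$-bound on the integral rather than try to exploit oscillation via Parseval; this is appropriate here because the lemma governs an \emph{error term}, whereas the Parseval/oscillatory analysis is reserved for Lemma~\ref{20_41_1_20} and the ensuing main lower-bound arguments.
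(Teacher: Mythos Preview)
Your proposal is correct and follows essentially the same route as the paper: bound $|a_m(T,h)|\ll h(\log m)\log_2 T$, note the support $m\leq (\log T)^{2\gamma}$, apply Proposition~\ref{19_4_6:04pm}(ii) to the integral, and estimate the truncated sum $\sum_{m\leq (\log T)^{2\gamma}}\Lambda(m)(\log m)^{-n}m^{-\sigma}\ll (\log T)^{2\gamma(1-\sigma)}(\log_2 T)^{-n}$. The only cosmetic difference is that you justify this last estimate via Chebyshev and partial summation, whereas the paper cites the prime number theorem via \cite[B.1 Appendix]{CChiM}; either suffices.
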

\begin{proof} Using the estimate $|a_m(T,h)|\ll h\log m\log_2T$, the fact that the sum runs over $2\leq m \leq (\log T)^{2\gamma}$, and $(ii)$ of Proposition \ref{19_4_6:04pm} it follows that
	\begin{align*}
	\Bigg|\displaystyle\sum_{m=2}^{\infty}\dfrac{\Lambda(m)\,a_{m}(T,h)}{(\log m)^{n+1}m^\sigma}\bigg(\int_{0}^{\infty}m^{-it}|R(t)|^2\Phi\bigg(\dfrac{t}{T}\bigg)\dt\bigg)\Bigg| \ll h\,T\log_2T\displaystyle\sum_{2\leq m \leq (\log T)^{2\gamma}}\dfrac{\Lambda(m)}{(\log m)^{n}m^\sigma}\displaystyle\sum_{l\in\mathcal{M}}f(l)^2.
	\end{align*}
	Using the prime number theorem (see \cite[B.1 Appendix]{CChiM}), we have
	\begin{equation*}
	\displaystyle\sum_{2\leq m \leq (\log T)^{2\gamma}}\dfrac{\Lambda(m)}{(\log m)^{n}m^\sigma}\ll \dfrac{(\log T)^{2\gamma(1-\sigma)}}{(\log_2T)^n}, 
	\end{equation*}
	and this implies the desired result.
\end{proof}

\section{Proof of Theorem \ref{27_9_7:50am}} Assume the Riemann hypothesis and consider the parameters defined in Proposition \ref{9_21_4:20pm}, Subsection \ref{subsection}. Throughout this section we will assume that
\begin{align} \label{9_30_3:27pm}
0\leq h \leq \dfrac{1}{2\log_2T}.
\end{align}
Using the fact that $\sin(x)\gg x$ for $0\leq x\leq1$, we obtain the bound
\begin{align} \label{9_26_6:25pm}
\sin(h\log m) \gg h\log m
\end{align} 
for $m\leq (\log T)^2$. We integrate \eqref{22_9_29_12} in the range $T^\beta\leq t\leq T\log T$  with $|R(t)|^2\,\Phi(t/T)$, and by $(ii)$ of Proposition \ref{19_4_6:04pm} we get
\begin{align} \label{9_22_2:12pm}
\begin{split}
&\int_{T^\beta}^{T\log T}|R(t)|^2\Phi\bigg(\dfrac{t}{T}\bigg)\Bigg(\int_{-(\log T)^{3}}^{(\log T)^{3}} \Delta_hS_n(\sigma,t+u)\bigg(\dfrac{\sin(\gamma u\log_2T)}{u}\bigg)^2\big(3\,\delta+2\,\delta'\sin(u\log_2T)\big)\du\Bigg)\dt  \\
& = 3\,\delta\,\im\Bigg\{i^{n+3}\displaystyle\sum_{m=2}^\infty\dfrac{\Lambda(m)\,a_m(T,h)}{(\log m)^{n+1}m^\sigma}\bigg(\int_{T^\beta}^{T\log T}m^{-it}|R(t)|^2\Phi\bigg(\dfrac{t}{T}\bigg)\dt\bigg)\Bigg\}   \\
& \,\,\,\,\,\,+\delta'\,\im\Bigg\{i^{n+2}\displaystyle\sum_{m=2}^\infty\dfrac{\Lambda(m)\,b_m(T,h)}{(\log m)^{n+1}m^\sigma}\bigg(\int_{T^\beta}^{T\log T}m^{-it}|R(t)|^2\Phi\bigg(\dfrac{t}{T}\bigg)\dt\bigg)\Bigg\}  + O\bigg(h\,T\log_2T\displaystyle\sum_{l\in\mathcal{M}}f(l)^2\bigg).
\end{split}
\end{align}
We want to complete the integrals that appears on the right-hand side of \eqref{9_22_2:12pm}, from $0$ to $\infty$. Using the estimate $|a_m(T,h)|\ll h\log m\log_2T$, $(i)$ of Proposition \ref{19_4_6:04pm} and the bound $\Phi(t)\leq 1$, we have
\begin{align*} 
\Bigg|\displaystyle\sum_{m=2}^\infty\dfrac{\Lambda(m)\,a_m(T,h)}{(\log m)^{n+1}m^\sigma}&\bigg(\int_{0}^{T^\beta}m^{-it}|R(t)|^2\Phi\bigg(\dfrac{t}{T}\bigg)\dt\bigg)\Bigg| \ll h\,T^{\kappa+\beta}\log_2T\displaystyle\sum_{l\in\mathcal{M}}f(l)^2\Bigg(\displaystyle\sum_{m\leq (\log T)^{2\gamma}}\dfrac{\Lambda(m)}{(\log m)^{n}m^{\sigma}}\Bigg).
\end{align*}
Therefore, using the prime number theorem we get 
\begin{align*} 
\Bigg|\displaystyle\sum_{m=2}^\infty\dfrac{\Lambda(m)\,a_m(T,h)}{(\log m)^{n+1}m^\sigma}\bigg(\int_{0}^{T^\beta}m^{-it}|R(t)|^2\Phi\bigg(\dfrac{t}{T}\bigg)\dt\bigg)\Bigg| \ll h\,T^{\kappa+\beta}\dfrac{(\log T)^{2\gamma(1-\sigma)}}{(\log_2T)^{n-1}}\displaystyle\sum_{l\in\mathcal{M}}f(l)^2 \ll h\,T\displaystyle\sum_{l\in\mathcal{M}}f(l)^2.
\end{align*}
Similarly, using the decay of $\Phi(t)$ we obtain 
\begin{align*} 
\Bigg|\displaystyle\sum_{m=2}^\infty\dfrac{\Lambda(m)\,a_m(T,h)}{(\log m)^{n+1}m^\sigma}&\bigg(\int_{T\log T}^{\infty}m^{-it}|R(t)|^2\Phi\bigg(\dfrac{t}{T}\bigg)\dt\bigg)\Bigg| \ll h\,T\displaystyle\sum_{l\in\mathcal{M}}f(l)^2.
\end{align*}
The analysis for $b_m(T,h)$ is analogous. Therefore, we can extend the integrals on the right-hand side of \eqref{9_22_2:12pm} from  $0$ to $\infty$. Now, we want to estimate the left-hand side of \eqref{9_22_2:12pm}. Assume that\footnote{\,\,\, In fact, the positivity of the right-hand side of \eqref{9_22_2:12pm} will be proved in the following subsections.} 
$$
\displaystyle\max_{\frac{T^\beta}{2}\leq t\leq 2T\log T}  \delta\,\Delta_hS_n(\sigma,t)>0.
$$
Using $(ii)$ of Proposition \ref{19_4_6:04pm}, it follows that
\begin{align*} 
&\int_{T^\beta}^{T\log T}|R(t)|^2\Phi\bigg(\dfrac{t}{T}\bigg)\Bigg(\int_{-(\log T)^{3}}^{(\log T)^{3}} \Delta_h S_n(\sigma,t+u)\bigg(\dfrac{\sin(\gamma u\log_2T)}{u}\bigg)^2\big(3\,\delta+2\,\delta'\sin(\gamma'u\log_2T)\big)\du\Bigg)\dt \nonumber \\
&\leq \log_2T\,\Bigg(\displaystyle\max_{\frac{T^\beta}{2}\leq t\leq 2T\log T}  \delta\,\Delta_hS_n(\sigma,t)\Bigg)\int_{-(\log T)^{3}\log_2T}^{(\log T)^{3}\log_2T} \bigg(\dfrac{\sin(\gamma u)}{u}\bigg)^2\bigg( 3+\dfrac{2\,\delta'}{\delta}\sin(\gamma'u)\bigg)\du\int_{T^\beta}^{T\log T}|R(t)|^2\Phi\bigg(\dfrac{t}{T}\bigg)\dt \nonumber \\
&\ll \log_2T\,\Bigg(\displaystyle\max_{\frac{T^\beta}{2}\leq t\leq 2T\log T}  \delta\,\Delta_hS_n(\sigma,t)\Bigg)\int_{-\infty}^{\infty} \bigg(\dfrac{\sin(\gamma u)}{u}\bigg)^2\du\int_{0}^{\infty}|R(t)|^2\Phi\bigg(\dfrac{t}{T}\bigg)\dt \nonumber \\
& \ll \Bigg(\displaystyle\max_{\frac{T^\beta}{2}\leq t\leq 2T\log T} \delta\,\Delta_hS_n(\sigma,t)\Bigg)\,T\log_2T\displaystyle\sum_{l\in\mathcal{M}}f(l)^2.
\end{align*}
Therefore, we obtain the following relation from \eqref{9_22_2:12pm}
\begin{align} \label{9_22_2:26pm} 
\begin{split}
 3\,\delta&\,\im\Bigg\{i^{n+3}\displaystyle\sum_{m=2}^\infty\dfrac{\Lambda(m)\,a_m(T,h)}{(\log m)^{n+1}m^\sigma}\bigg(\int_{0}^{\infty}m^{-it}|R(t)|^2\Phi\bigg(\dfrac{t}{T}\bigg)\dt\bigg)\Bigg\}   \\
& +\delta'\,\im\Bigg\{i^{n+2}\displaystyle\sum_{m=2}^\infty\dfrac{\Lambda(m)\,b_m(T,h)}{(\log m)^{n+1}m^\sigma}\bigg(\int_{0}^{\infty}m^{-it}|R(t)|^2\Phi\bigg(\dfrac{t}{T}\bigg)\dt\bigg)\Bigg\} + O\bigg(h\,T\log_2T\displaystyle\sum_{l\in\mathcal{M}}f(l)^2\bigg) \\
&\,\,\,\,\,\,\,\,\,\,\,\,\,\,\,\,\,\ll \Bigg(\displaystyle\max_{\frac{T^\beta}{2}\leq t\leq 2T\log T} \delta\,\Delta_hS_n(\sigma,t)\Bigg)\,T\log_2T\displaystyle\sum_{l\in\mathcal{M}}f(l)^2.
\end{split}
\end{align}Let us to analyze the left-hand side of \eqref{9_22_2:26pm}.\subsection{The case $n\,\equiv\, 1 \, (\mathrm{mod} \, 2)$} We choose the parameters $\gamma=1/8$, $\delta\in\{-1,1\}$ and $\delta'=(-1)^{(n+1)/2}$. Using the fact that $i^{n+2}=(-1)^{(n+1)/2}\,i$, we conclude from \eqref{9_22_2:26pm} that
\begin{align*}  &3\,\delta(-1)^{(n+3)/2}\,\im\Bigg\{\displaystyle\sum_{m=2}^\infty\dfrac{\Lambda(m)\,a_m(T,h)}{(\log m)^{n+1}m^\sigma}\bigg(\int_{0}^{\infty}m^{-it}|R(t)|^2\Phi\bigg(\dfrac{t}{T}\bigg)\dt\bigg)\Bigg\}   \\
& +\re\Bigg\{\displaystyle\sum_{m=2}^\infty\dfrac{\Lambda(m)\,b_m(T,h)}{(\log m)^{n+1}m^\sigma}\bigg(\int_{0}^{\infty}m^{-it}|R(t)|^2\Phi\bigg(\dfrac{t}{T}\bigg)\dt\bigg)\Bigg\} + O\bigg(h\,T\log_2T\displaystyle\sum_{l\in\mathcal{M}}f(l)^2\bigg) 
\\
& \,\,\,\,\,\,\,\,\,\,\,\,\,\,\,\,\,\,\,\,\,\,\,\,\,\,\,\,\,\,\,\,\,\ll \Bigg(\displaystyle\max_{\frac{T^\beta}{2}\leq t\leq 2T\log T} \delta\,\Delta_hS_n(\sigma,t)\Bigg)\,T\log_2T\displaystyle\sum_{l\in\mathcal{M}}f(l)^2.
\end{align*}
Using the fact that $|R(t)|^2$ and $\Phi(t)$ are real and even functions, we have
\begin{align} \label{1_11_31_01}
\re\Bigg\{\int_{0}^{\infty}m^{-it}|R(t)|^2\Phi\bigg(\dfrac{t}{T}\bigg)\dt\Bigg\} = \dfrac{1}{2}\int_{-\infty}^{\infty}m^{-it}|R(t)|^2\Phi\bigg(\dfrac{t}{T}\bigg)\dt.
\end{align}
Therefore, by Lemma \ref{9_22_6:02pm2} it follows that
\begin{align}  \label{2_16_02_20}
\begin{split} 
\dfrac{1}{2}\displaystyle\sum_{m=2}^\infty\dfrac{\Lambda(m)\,b_m(T,h)}{(\log m)^{n+1}m^\sigma}&\bigg(\int_{-\infty}^{\infty}m^{-it}|R(t)|^2\Phi\bigg(\dfrac{t}{T}\bigg)\dt\bigg) + O\Bigg(h\,T\,(\log T)^{(1-\sigma)/4} \log_2T \displaystyle\sum_{l\in\mathcal M}f(l)^2\Bigg) \\
& \ll \Bigg(\displaystyle\max_{\frac{T^\beta}{2}\leq t\leq 2T\log T} \delta\,\Delta_hS_n(\sigma,t)\Bigg)\,T\log_2T\displaystyle\sum_{l\in\mathcal{M}}f(l)^2.
\end{split}
\end{align}
Note that the sum on the above expression runs over $(\log T)^{3/4}\leq m\leq (\log T)^{5/4}$. Then, for $(\log T)^{7/8}\leq m\leq (\log T)^{9/8}$ we have that  $w_m(\log_2T/8,\log_2T)\gg \log m$. Therefore, using \eqref{9_26_6:25pm} we conclude $b_m(T,h) \gg h (\log m)^2$ for $(\log T)^{7/8}\leq m\leq (\log T)^{9/8}$. Using  \eqref{20_4_6:46pm}, for each $p\in \mathcal{P}$, we have $b_p(T,h)\gg h(\log p)^2$, for $T$ sufficiently large. This implies that 
$$
\min_{p\in\mathcal{P}}\dfrac{b_p(T,h)}{(\log p)^n}\gg \min_{p\in\mathcal{P}} \dfrac{h}{(\log p)^{n-2}} \gg \dfrac{h}{(\log_2T)^{n-2}}.
$$
Then, using Lemma \ref{20_41_1_20} we have
$$
\displaystyle\sum_{m=2}^\infty\dfrac{\Lambda(m)\,b_m(T,h)}{(\log m)^{n+1}m^\sigma}\bigg(\int_{-\infty}^{\infty}m^{-it}|R(t)|^2\Phi\bigg(\dfrac{t}{T}\bigg)\dt\bigg)\gg h\,T\,\dfrac{(\log T)^{1-\sigma}(\log_3T)^{\sigma}}{(\log_2T)^{\sigma+n-2}}\displaystyle\sum_{l\in\mathcal{M}}f(l)^2.
$$
Inserting this estimate in \eqref{2_16_02_20}, it follows that
\begin{equation} \label{16_8_31_12}
h\,\dfrac{(\log T)^{1-\sigma}(\log_3T)^{\sigma}}{ (\log_2T)^{\sigma+n-1}}\ll \displaystyle\max_{\frac{T^\beta}{2}\leq t\leq 2T\log T} \delta\,\Delta_hS_n(\sigma,t),
\end{equation}
for any $\delta\in\{-1,1\}$ and $h$ satisfying \eqref{9_30_3:27pm}. Since that \eqref{17_24} holds, we can change the left-hand side of \eqref{16_8_31_12} by
$$h\,\dfrac{(\log T)^{1/2}(\log_3T)^{1/2}}{ (\log_2T)^{n-1/2}}.
$$
We replace $\Delta_hS_n(\sigma,t)$ with $S_n(\sigma,t+2h)-S_n(\sigma,t)$, by changing $t-h$ to $t$, where the maximum is taken over $T^{\beta}/3\leq t\leq 3T\log T$. We obtain the desired result after a trivial adjustment, changing $T$ to $T/3\log T$ and choosing a slightly smaller $\beta$. 

\subsection{The case $n\,\equiv\, 0 \, (\mathrm{mod} \, 2)$}
We choose the parameters $\gamma=2/3$, $\delta=(-1)^{(n+2)/2}$ and $\delta'=0$. Using the fact that $i^{n+3}=(-1)^{(n+2)/2}\,i$, we conclude from \eqref{9_22_2:26pm} that
\begin{align} \label{9_27_9:11am}
\begin{split}
&3\,\re\Bigg\{\displaystyle\sum_{m=2}^\infty\dfrac{\Lambda(m)\,a_m(T,h)}{(\log m)^{n+1}m^\sigma}\bigg(\int_{0}^{\infty}m^{-it}|R(t)|^2\Phi\bigg(\dfrac{t}{T}\bigg)\dt\bigg)\Bigg\}  + O\bigg(h\,T\log_2T\displaystyle\sum_{l\in\mathcal{M}}f(l)^2\bigg) \\
&\ll \Bigg(\displaystyle\max_{\frac{T^\beta}{2}\leq t\leq 2T\log T} \delta\,\Delta_hS_n(\sigma,t)\Bigg)\,T\log_2T\displaystyle\sum_{l\in\mathcal{M}}f(l)^2.
\end{split}
\end{align}
Therefore, using \eqref{1_11_31_01}, Lemma \ref{20_41_1_20} and doing the same procedure as in the previous case, we obtain the required lower bound.

\section{Proof of theorem \ref{27_9_7:50am2}}
The proof for the case of $S(t)$ follows the same outline of Theorem \ref{27_9_7:50am}, but with a slight change in Lemma  \ref{9_20_10:05am}. By \cite[Lemma 2]{Ma}, we have that
	\begin{align*}
	\int_{-(\log T)^{3}}^{(\log T)^{3}}\log\zeta\bigg(\dfrac{1}{2}+i(t+u)\bigg)&\bigg(\dfrac{\sin\alpha u}{u}\bigg)^2e^{iHu}\du = \dfrac{\pi}{2}\displaystyle\sum_{m=2}^{\infty}\dfrac{\Lambda(m)\,w_m(\alpha,H)}{(\log m)\,m^{\frac{1}{2}+it}} + O\bigg(\dfrac{e^{2\alpha + |H|}}{(\log T)^3}\bigg).
	\end{align*} 
Therefore for $0\leq h\leq 1$ and for sufficiently large $T$, we have
$$
\int_{-(\log T)^{3}}^{(\log T)^{3}}\Delta_h\log\zeta\bigg(\dfrac{1}{2}+i(t+u)\bigg)\bigg(\dfrac{\sin\alpha u}{u}\bigg)^2e^{iHu}\du =-\pi i\displaystyle\sum_{m=2}^{\infty}\dfrac{\Lambda(m)\,w_m(\alpha,H)\sin(h\log m)}{(\log m)\,m^{\sigma+it}} + O\bigg(\dfrac{e^{2\alpha + |H|}}{(\log T)^3}\bigg).
$$
Note that the main difference of this estimate from the Lemma \ref{9_20_10:05am} appears in the error term. Computing exactly as in the preceding cases, we get the following equivalent formula for the equation \eqref{9_27_9:11am},
\begin{align*} 
3\,\re\Bigg\{\displaystyle\sum_{m=2}^\infty\dfrac{\Lambda(m)\,a_m(T,h)}{(\log m)m^\frac{1}{2}}&\bigg(\int_{0}^{\infty}m^{-it}|R(t)|^2\Phi\bigg(\dfrac{t}{T}\bigg)\dt\bigg)\Bigg\}  + O\bigg(T\log_2T\displaystyle\sum_{l\in\mathcal{M}}f(l)^2\bigg) \\
&\ll \Bigg(\displaystyle\max_{\frac{T^\beta}{2}\leq t\leq 2T\log T} -\,\Delta_hS(t)\Bigg)\,T\log_2T\displaystyle\sum_{l\in\mathcal{M}}f(l)^2.
\end{align*}
Then, by \eqref{1_11_31_01} and Lemma \ref{20_41_1_20} we obtain 
\begin{align*}
h\,(\log T)^{1/2}(\log_2T)^{1/2}(\log_3T)^{1/2} + O(1) \ll\displaystyle\max_{\frac{T^\beta}{2}\leq t\leq 2T\log T} -\,\Delta_hS(t).
\end{align*}
Here appears the new restriction for $h$. Choosing $h\in [c_1(\log T)^{-1/2}(\log_2 T)^{-1/2}(\log_3 T)^{-1/2},(\log_2 T)^{-1}]$, for a suitable constant $c_1$, and adjusting $T$ and $\beta$, we obtain the desired result.

\medskip

\section{Proof of the corollaries}

\subsection{Proof of Corollary \ref{9_29_4:06pm}} The proof follows from the following inequality for $n\geq 0$,
	\begin{align*} 
	\displaystyle\max_{u\in [t,t+h]}\pm{S_n(\sigma,u)}\geq h^{-1}\int_{t}^{t+h}\pm S_n(\sigma,u)\du = h^{-1}\left(\pm\{S_{n+1}(\sigma,t+h)-S_{n+1}(\sigma,t)\}\right),
	\end{align*}
	with $h=(\log\log T)^{-1}$, changing $T$ to $T/(2\log\log T)$ and making $\beta$ slightly smaller.

\subsection{Proof of Corollary \ref{21_322}}
	Under RH, it follows from Corollary \ref{9_29_4:06pm}. If the Riemann hypothesis fails, by \cite[Page 6]{F1}, we have
	$$
	S_n(t)\gg t^{n-2},
	$$
	for $t$ sufficiently large. This implies the desired result.

\medskip
\section*{Acknowledgments}
We would like to thank Kristian Seip for many valuable discussions and for their insightful comments. We would also like to thank Micah B. Milinovich for pointing to us to use the result of Fujii to obtain Corollary \ref{21_322}, and the anonymous referee for the careful review.

\end{document}